\theoremstyle{theorem}
\newtheorem{Def}{Definition}[section]
\newtheorem{Lem}[Def]{Lemma}
\newtheorem{Thm}[Def]{Theorem}
\newtheorem{Ass}[Def]{Assumption}
\theoremstyle{definition}
\newtheorem{Rem}[Def]{Remark}
\newcommand{\e}{\mathbb{E}}
\newcommand{\real}{\mathbb{R}}
\newcommand{\n}{\mathbb{N}}
\newcommand{\1}{{\bf 1}}
\newcommand{\F}{\mathcal{F}}
\newcommand{\ff}{\mathbb{F}}
\begin{document}

\title{
%title
On the Euler-Maruyama scheme for spectrally one-sided L\'evy driven SDEs with H\"older continuous coefficients\\
%On the Euler scheme for L\'evy driven SDEs with H\"older continuous coefficients via Wiener-Hopf factorization
%Euler-Poisson scheme for stochastic differential equations driven by L\'evy process with H\"older continuous coefficients
	}
\vskip 40pt	
\author{Libo Li\footnote{Corresponding Author}\\
		School of Mathematics and Statistics,\\
		University of New South Wales,\\
		NSW 2006, Australia,\\
		Email: libo.li@unsw.edu.au\\
\and
 Dai Taguchi\\
		Graduate School of Engineering Science,\\
		Osaka University,\\
		1-3, Machikaneyama-cho, Toyonaka,\\
		Osaka, Japan, \\
		Email: dai.taguchi.dai@gmail.com,	\\	
}

\date{}
\maketitle
\begin{abstract}
We study in this article the strong rate of convergence of the Euler-Maruyama scheme and associated with the jump-type equation introduced in Li and Mytnik \cite{LiMy}. We obtain the strong rate of convergence under similar assumptions for strong existence and pathwise uniqueness.
Models of this type can be considered as a generalization of the CIR (Cox-Ingersoll-Ross) process with jumps.
\\\\
\textbf{2010 Mathematics Subject Classification}:
60H35; 41A25; 60H10; 65C30
%60H35 Computational methods for stochastic equations
%41A25 Rate of convergence, degree of approximation
%60H10 Stochastic ordinary differential equations
%65C30 Stochastic differential and integral equations
\\\\
\textbf{Keywords}:
Euler-Maruyama scheme $\cdot$
$\alpha$-CIR models $\cdot$
L\'evy driven SDEs $\cdot$
H\"older continuous coefficients $\cdot$
Spectrally positive L\'evy process
\end{abstract}
%\vfill\break 
%\tableofcontents
%\vfill\break 
\section{Introduction}
In mathematical finance, a popular model for short term interest rates is the Cox-Ingersoll-Ross (CIR) model, which is the solution to the one-dimensional stochastic differential equation (SDE)
\begin{align}\label{SDE-cir}
X_t=
x_0
+\int_{0}^{t} a(c-X_s) ds
+\int_{0}^{t} \sqrt{X_{s}}dW_s\quad 
~x_0 \in \real,
~t \in [0,T],
\end{align}
where $a,c> 0$ and $W=(W_t)_{0\leq t \leq T}$ is a standard one-dimensional Brownian motion. There has been a push in the financial mathematics literature to generalize the CIR models to include jumps. The most noteable works in this direction are the affline jump-diffusion models proposed in Duffie et al. \cite{DFS, DPS}.
%
%However in the case of Komatsu \cite{Komatsu} that pathwise uniqueness holds for a symmetric $\alpha$-stable driven SDE with $1/\alpha$-H\"older continuous coefficients only when there is no drift term (for $\alpha \in (1,2)$). For existence and pathwise uniqueness of stable driven SDEs, we mention also the works of Priola \cite {PE} and Fournier \cite{F} and the references within. 

%\begin{gather}
%X_t = x_0 +\int_{0}^{t} h(X_{s-})dZ_s,\quad 
%~x_0 \in \real,
%~t \in [0,T],\label{SDE_2}
%\end{gather}
%where the driving process $Z$ is a symmetric $\alpha$-stable process with $\alpha \in (1,2)$ with the jump coefficient $h$ been $1/\alpha$-H\"older continuous. 

Motivated by the recent developments of continuous-state branching processes. It was shown in Fu and Li \cite{FL}, and later extended in Li and Mytnik \cite{LiMy} to more general jump type equations, that is if $b$, $\sigma$ and $h$ are H\"older continuous and $h$ is non-decreasing then existence and pathwise uniqueness of solution holds for SDEs of the form
\begin{align}\label{SDE_1}
X_t& = x_0
+\int_{0}^{t} b(X_{s-})ds
+\int_{0}^{t} \sigma(X_{s-})dW_s
+\int_{0}^{t} h(X_{s-})dL_s,\quad 
~x_0 \in \real,
~t \in [0,T].\\
L_t & =\int_{0}^{t} \int_{0}^{\infty} z \widetilde{N}(ds,dz).\label{ll2}
\end{align}
where the process $W=(W_t)_{0\leq t \leq T}$ is a standard one-dimensional Brownian motion and $\widetilde N$ is a compensated Poisson random measure with intensity or L\'evy measure $\nu$ satisfying the condition $\int^\infty_0 \{z^2\wedge z\}\, \nu(dz) < \infty$. In the recent paper of Jiao et al. \cite{JMS, JMSS}, in order to capture the persistency of low interest rate, self-exciting and large jump behaviours exhibited by sovereign interest rates and power markets, a version of the model considered in \cite{FL, LiMy} was introduced to the financial mathematics literature as the $\alpha$-CIR process. 

In pracitice, the solution to equation (\ref{SDE_1}) is rarely analytically tractable, the goal of this article is to study under similar assumptions to those of \cite{LiMy}, the strong rate of convergence for Euler-Maruyama scheme associated with the SDE \eqref{SDE_1}. From the point of view of strong existence and pathwise uniqueness of a solution, the fact that the L\'evy measure $\nu$ is stable plays (as chosen in Jiao et al. \cite{JMS}) very little role (see Theorem 2.3 in \cite{LiMy}). One can consider any spectrally positive L\'evy process of the form given in \eqref{ll2} and produce a wide range of {\it generalized CIR processes} with different jump structures. 

Given $n\in \mathbb{N}$ and a time grid $0=t_0<t_1\dots< t_n = T$, the Euler-Maruyama scheme associated with equation \eqref{SDE_1} is given by $X_0 := x_0$ and
\begin{align*}
X^{(n)} _{t_i} := x_0 +\int_{(0,t_i]} \sum_{j=0}^{n-1} b(X^{(n)} _{t_j})\1_{(t_j,t_{j+1}]}(s) ds + \int_{(0,t_i]} \sum_{j=0}^{n-1} \sigma(X^{(n)} _{t_j})\1_{(t_j,t_{j+1}]}(s) dW_s + \int_{(0,t_i]} \sum_{j=0}^{n-1} h(X^{(n)} _{t_j})\1_{(t_j,t_{j+1} ]}(s)  dL_s
\end{align*}
and one can extend the definition of the Euler-Maruyama scheme to continuous time by setting 
\begin{align*}
X^{(n)}_{t} = x_0 + \int_{(0,t]} b(X^{(n)} _{\eta_n(s)})ds+ \int_{(0,t]} \sigma(X^{(n)} _{\eta_n(s)})dW_s + \int_{(0,t]} h(X^{(n)} _{\eta_n(s)})dL_s
\end{align*}
where $\eta_n(s):=t_j$ if $s\in (t_j,t_{j+1}]$. The process $(X^{(n)}_{\eta_n(t)})_{0\leq t\leq T}$ is left continuous and for the purpose of this paper, we take equally spaced time grid of size $T/n$.

Using techniques from Yamada and Watanabe \cite{YaWa}, Gy\"ongy and R\'asonyi \cite{GyRa} proved that if the drift coefficient $b$ is the sum of a Lipschitz and a non-increasing $\rho$-H\"older continuous, the diffusion coefficient $\sigma$ is $\gamma$-H\"older continuous with $\gamma \in [1/2,1]$ and the jump coefficient $h=0$, then
\begin{align*}%\label{EM_BM}
	\e[|X_T-X_{T}^{(n)}|]
	\leq \left\{ \begin{array}{ll}
	Cn^{-\frac{\rho}{2} \wedge (\gamma-\frac{1}{2})} &\text{ if } \gamma \in (1/2,1],\\
	C(\log n)^{-1}, &\text{ if } \gamma =1/2.
	\end{array}\right.
\end{align*}
In \cite{Y}, Yan proved similar results when $\gamma>1/2$ by using Tanaka's formula. These results are later extended, in for exmaple \cite{MeTa, NT}, to SDEs with irregular drift and diffusion coefficients. In the case where $h\neq 0$, $L$ is a symmetric $\alpha$-stable process with $\alpha \in (1,2)$ and $b=\sigma=0$, Hashimoto and Tsuchiya \cite{HaTsu} shown using the method of Komatsu \cite{Komatsu}, if the coefficient jump $h$ is bounded $\gamma$-H\"older continuous with $\gamma \in [1/\alpha,1]$, then
\begin{align*}%\label{EM_stable}
\e[|X_T-X_{T}^{(n)}|^{\alpha-1}]
\leq \left\{ \begin{array}{ll}
Cn^{-(\gamma-\frac{1}{\alpha})} &\text{ if } \gamma \in (1/\alpha,1],\\
C(\log n)^{-(\alpha-1)}, &\text{ if } \gamma =1/\alpha.
\end{array}\right.
\end{align*}
We mention here also the works of Hashimoto \cite{Ha}, Mikulevi\v cius and Xu \cite{MiXu},  Qiao \cite{Qiao} for strong convergence and Mikulevi\v cius and Zhang \cite{MiZh} for weak convergence. However there is little in the current literature on the Euler-Maruyama scheme for jump-type equation with H\"older continuous coefficients and drift. To the best of our knowledge, there is no result on the strong rate of convergence for equation of the form \eqref{SDE_1}. 

%Most likely, the main reason is that for a generic L\'evy process, it is usually not so straight forward to simulate it's increments and therefore difficult to apply the standard Euler-Maruyam scheme. 

The structure of the current work is as follows. In section \ref{notation} we introduce the necessary notations and our standing assumptions. In section \ref{YW}, we introduce the Yamada-Watanabe approximation technique and give two auxiliary results in Lemma \ref{key_lem_0} and Lemma \ref{key_lem12}, which are used in controlling the jump part of the approximation. In section \ref{L1}, under boundedness assumption on the coefficients $\sigma$ and $h$, we obtain in Theorem \ref{main_1} the strong rate of convergence of the Euler-Maruyama scheme for driving L\'evy processes which are non-square integrable. In section \ref{L2}, we consider the case of square integrable L\'evy processes and obtain in Theorem \ref{l2t} the strong rate of convergence without any boundedness assumption on the coefficients.

\subsection{Notations and Assumptions}\label{notation}
We work on the usual filtered probability space $(\Omega, \F, \mathbb{P})$ endowed with a filtration $\ff:=(\F_t)_{t\geq 0}$ which satisfies the usual conditions and $\F_\infty \subset \F$. We denote the sup-norm by $\| \cdot \|_\infty$ and set
\begin{align*}
\alpha_{\nu}:=\inf\{\widehat{\alpha}>1; \lim_{x \to 0+} x^{\widehat{\alpha}-1} \int_{x}^{\infty}z \nu(dz)=0\}.
\end{align*}

%The L\'evy measure $\nu$ and the coefficients $b$, $\sigma$ and $h$ are assumed to satisfy the following assumptions.
\begin{Ass}\label{Ass_1}
We assume that the L\'evy measure $\nu$, and the coefficients $b$, $\sigma$ and $h$ satisfies the following conditions:
\begin{itemize}	
	\item[(i)]
	There exist $\zeta \in [1/2,1]$ and $K_0>0$ such that 
	\begin{align*}
		\sup_{t,s \in [0,T]}\e[|L_t-L_s|] \leq K_0|t-s|^{\zeta}.
		%\text{ and } \e[|L_t|^{?}] \leq K_0.
	\end{align*}
Note that examples of $L$ include compensated $\alpha$-stable L\'evy process for $\alpha \in [1,2]$, compensated square integrable L\'evy processes and compensated compound Poisson process with integrable jump size.
	
	\item[(ii)] The L\'evy measure $\nu$ is such that $\nu((-\infty,0))=0$ and $\displaystyle \int_{0}^{\infty} \{z \wedge z^2\} \nu(dz)<\infty$.	
	\item[(iii)] The drift coefficient $b$ is of the form $b=b_1+b_2$ where $b_1$ is a Lipschitz continuous function, and $b_2$ is a non-increasing $\rho$-H\"older continuous function with $\rho \in (0,1)$, that is,
	\begin{align*}
	K_1:=\sup_{x,y \in \real,x\neq y} \frac{|b_1(x)-b_1(y)|}{|x-y|}
	+\sup_{x,y \in \real,x\neq y} \frac{|b_2(x)-b_2(y)|}{|x-y|^{\rho}}
	<\infty.
	\end{align*}
	
	%\item[(i)] $b$ is a one-sided Lipschitz and linear growth function, that is, there exists $K>0$ such that for any $x,y \in \real$,
	%\begin{align*}
	%(x-y)(b(x)-b(y)) \leq K |x-y|^2, \text{ and } |b(x)|\leq K_1(1+|x|).
	%\end{align*}
		
	\item[(iv)] The diffusion coefficient $\sigma$ is an $\gamma$-H\"older continuous function with $\gamma \in [1/2,1)$ and the coefficient $h$ is an $\beta$-H\"older continuous function with $\beta \in (1-1/\alpha_{\nu},1)$, that is,
	\begin{align*}
	K_2:=\sup_{x,y \in \real,x\neq y} \frac{|\sigma(x)-\sigma(y)|}{|x-y|^{\gamma}}
	+\sup_{x,y \in \real,x\neq y} \frac{|h(x)-h(y)|}{|x-y|^{\beta}}
	<\infty.
	\end{align*}
	
	\item[(v)] The coefficient $h$ is a non-decreasing function.
		
\end{itemize}
\end{Ass}

	By Assumption \ref{Ass_1} (iii) and (iv), there exists $K_3$ such that for any $x \in \real$, $|b(x)|+|\sigma(x)|+|h(x)|\leq K_3(1+|x|)$ and we denote $K:=\max\{K_0,K_1,K_2,K_3\}$.

\begin{Rem}\label{remark1.2} We list now some consequences of Assumption \ref{Ass_1}.
\begin{itemize}
	\item[(i)]
	From Lemma 2.1 of Li and Mytnik \cite{LiMy}, if $\int_{0}^{\infty} \{z \wedge z^2\} \nu(dz)<\infty$ then $\alpha_{\nu} \in [1,2]$ and for any $\widehat{\alpha}>\alpha_{\nu}$, $\lim_{x \to 0+} x^{\widehat{\alpha}-2} \int_{0}^{x} z^2 \nu(dz)=0$.
 % and we have $\beta \in (1/2,1)$.	
	\item[(ii)]
From Theorem 25.3 and Theorem 25.18 of Sato \cite{SK}, we know that for any $p>0$, $\mathbb{E}[|L_t|^p]$ and $\mathbb{E}[\sup_{s\leq t} |L_s|^p]$ are finite for all $t \geq 0$ if and only if $\int_1^\infty z^p \nu(dz) < \infty$. 
\end{itemize}
\end{Rem}

%We define the Poisson process $\mathbf{N}(n/T)=(\mathbf{N}_t(n/T))_{t \geq 0}$, where $\mathbf{N}_t(n/T):=\sum_{i=1}^{\infty}\1_{\{\tau_i^{(n)} \leq t\}}$ and 

\subsection{Yamada and Watanabe Approximation Technique}\label{YW}
To deal with the H\"older continuity of the coefficients $\sigma$ and $h$, we introduce below the Yamada and Watanabe approximation technique (see for example \cite{GyRa,LiMy,YaWa}).
For each $\delta \in (1,\infty)$ and $\varepsilon \in (0,1)$, we select a continuous function $\psi _{\delta, \varepsilon}: \real \to \real^+$ with support of $\psi _{\delta, \varepsilon}$ belongs to $[\varepsilon/\delta, \varepsilon]$ and is such that
\begin{align*} 
\int_{\varepsilon/\delta}^{\varepsilon} \psi _{\delta, \varepsilon}(z) dz
= 1 \quad \text{ and } \quad  0 \leq \psi _{\delta, \varepsilon}(z) \leq \frac{2}{z \log \delta}, \:\:\:z > 0.
\end{align*}
%We point out that since $\int_{\varepsilon/\delta}^{\varepsilon} \frac{2}{z \log \delta} dz=2$, such a function $\psi_{\delta, \varepsilon}$ must exist.
We define a function $\phi_{\delta, \varepsilon} \in C^2(\real;\real)$ by setting
\begin{align*}
\phi_{\delta, \varepsilon}(x)&:=\int_0^{|x|}\int_0^y \psi _{\delta, \varepsilon}(z)dzdy.
\end{align*}
It is straight forward to verify that $\phi_{\delta, \varepsilon}$ has the following useful properties: 
\begin{align} 
&|x| \leq \varepsilon + \phi_{\delta, \varepsilon}(x), \text{ for any $x \in \real $}, \label{phi3}\\ 
%&\frac{\phi'_{\delta, \varepsilon}(x)}{x}>0, \text{ for any $x \in \real 
%\setminus \{0\}$}. \label{phi1}\\
&0 \leq |\phi'_{\delta, \varepsilon}(x)| \leq 1, \text{ for any $x \in \real$} \label{phi2}, \\
&\phi'_{\delta, \varepsilon}(x) \geq 0, \text{ for } x\geq 0 \text{ and } \phi'_{\delta, \varepsilon}(x) < 0, \text{ for } x< 0, \label{phi1}\\
&\phi''_{\delta, \varepsilon}(\pm|x|)=\psi_{\delta, \varepsilon}(|x|)
\leq \frac{2}{|x|\log \delta}{\bf 1}_{[\varepsilon/\delta, \varepsilon]}(|x|)
\leq \frac{2\delta }{\varepsilon \log \delta},
\text{ for any $x \in \real \setminus\{0\}$}. \label{phi4}
\end{align}

We present below two auxiliary lemmas, which are used to control the jumps in the estimation of the 	strong error. Lemma \ref{key_lem_0} below is analogues to Lemma 3.2 given in \cite{LiMy}.
\begin{Lem}\label{key_lem_0}
	Suppose that the L\'evy measure $\nu$ satisfies $\int_0^{\infty} \{z \wedge z^2\} \nu(dz) < \infty$.
	Let $\varepsilon \in (0,1)$ and $\delta \in (1,\infty)$.
	Then for any $x \in \real$, $y \in \real \setminus\{0\}$ with $xy \geq 0$ and $u>0$, it holds that
	\begin{align*}%\label{key_lem_1}
	&\int_{0}^{\infty}
	\{\phi_{\delta,\varepsilon}(y+xz)-\phi_{\delta,\varepsilon}(y)-xz\phi_{\delta,\varepsilon}'(y)\} \nu(dz)\notag
	\leq 2 \cdot \1_{(0,\varepsilon]}(|y|)
	\left\{
	\frac{|x|^2}{\log \delta} \left(\frac{1}{|y|} \wedge \frac{\delta}{\varepsilon}\right) \int_{0}^{u} z^2 \nu(dz)
	+ |x| \int_{u}^{\infty} z \nu(dz)
	\right\}.
	\end{align*}
\end{Lem}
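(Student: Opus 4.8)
The plan is to reduce to the case $x>0,\ y>0$, rewrite the integrand as a double integral of $\phi''_{\delta,\varepsilon}$, and then exploit that $\phi''_{\delta,\varepsilon}$ is supported on $[\varepsilon/\delta,\varepsilon]$ to extract both the indicator $\1_{(0,\varepsilon]}(|y|)$ and the factor $\tfrac1{|y|}\wedge\tfrac\delta\varepsilon$; the two summands on the right-hand side then come from splitting the $\nu$-integral at $u$. First I would note that $\phi_{\delta,\varepsilon}$ is even and $\phi'_{\delta,\varepsilon}$ odd, so the integrand $g(x,y,z):=\phi_{\delta,\varepsilon}(y+xz)-\phi_{\delta,\varepsilon}(y)-xz\,\phi'_{\delta,\varepsilon}(y)$ satisfies $g(-x,-y,z)=g(x,y,z)$, while the claimed right-hand side depends on $(x,y)$ only through $(|x|,|y|)$; since also $g(0,y,z)=0$, and $xy\ge0$, this reduces matters to $x>0,\ y>0$. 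For such $x,y$ and any $z>0$ we have $y+xz>0$, and as $\phi_{\delta,\varepsilon}\in C^2$ we may write
\begin{align*}
g(x,y,z)=\int_0^{xz}\!\!\big(\phi'_{\delta,\varepsilon}(y+s)-\phi'_{\delta,\varepsilon}(y)\big)\,ds=\int_0^{xz}\!\!\int_y^{y+s}\phi''_{\delta,\varepsilon}(w)\,dw\,ds\ \geq\ 0 .
\end{align*}

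Next I would record two bounds for the inner integral, both consequences of \eqref{phi4}. Since $\phi''_{\delta,\varepsilon}(w)=\psi_{\delta,\varepsilon}(w)$ is supported in $[\varepsilon/\delta,\varepsilon]$, the inner integral vanishes unless $[y,y+s]\cap[\varepsilon/\delta,\varepsilon]\neq\emptyset$, which forces $y\le\varepsilon$; hence $g(x,y,z)=\1_{(0,\varepsilon]}(y)\,g(x,y,z)$. Moreover, for $w\in[y,y+s]$ we have $w\ge y$ and, on the support, $w\ge\varepsilon/\delta$, so \eqref{phi4} gives $\phi''_{\delta,\varepsilon}(w)\le \tfrac{2}{w\log\delta}\1_{[\varepsilon/\delta,\varepsilon]}(w)\le \tfrac{2}{\log\delta}\big(\tfrac1y\wedge\tfrac\delta\varepsilon\big)\1_{[\varepsilon/\delta,\varepsilon]}(w)$, whence $\int_y^{y+s}\phi''_{\delta,\varepsilon}(w)\,dw\le \tfrac{2s}{\log\delta}\big(\tfrac1y\wedge\tfrac\delta\varepsilon\big)$; a cruder estimate, using $\int_{\varepsilon/\delta}^{\varepsilon}\psi_{\delta,\varepsilon}=1$, is $\int_y^{y+s}\phi''_{\delta,\varepsilon}(w)\,dw\le 1$.

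Finally I would split the $\nu$-integral at $u$. For $z\in(0,u]$ the refined bound yields $g(x,y,z)\le \tfrac{2}{\log\delta}\big(\tfrac1y\wedge\tfrac\delta\varepsilon\big)\int_0^{xz}s\,ds=\tfrac{(xz)^2}{\log\delta}\big(\tfrac1y\wedge\tfrac\delta\varepsilon\big)$, so $\int_0^u g\,\nu(dz)\le \tfrac{|x|^2}{\log\delta}\big(\tfrac1{|y|}\wedge\tfrac\delta\varepsilon\big)\int_0^u z^2\,\nu(dz)$, finite since $\int_0^{\infty}\{z\wedge z^2\}\,\nu(dz)<\infty$. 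For $z\in(u,\infty)$ the crude bound gives $g(x,y,z)\le xz$, hence $\int_u^\infty g\,\nu(dz)\le |x|\int_u^\infty z\,\nu(dz)<\infty$ (again by the Lévy integrability assumption, as $z\wedge z^2=z$ for $z\ge1$). Adding the two, reinstating the indicator $\1_{(0,\varepsilon]}(y)$ established above, reverting to $|x|,|y|$ via the symmetry reduction, and bounding the resulting sum by twice itself to match the stated constant, one obtains exactly the claimed inequality. I do not expect a genuine obstacle here: the only delicate point is the bookkeeping that simultaneously produces the indicator $\1_{(0,\varepsilon]}(|y|)$ and the minimum $\tfrac1{|y|}\wedge\tfrac\delta\varepsilon$ from the location and bound on the support of $\psi_{\delta,\varepsilon}$; everything else is routine.
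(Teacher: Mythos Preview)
Your proof is correct and follows essentially the same approach as the paper's: both express the Taylor remainder as an integral of $\phi''_{\delta,\varepsilon}$, use the sign condition $xy\ge0$ together with the support of $\psi_{\delta,\varepsilon}$ to extract the indicator $\1_{(0,\varepsilon]}(|y|)$ and the factor $\tfrac{1}{|y|}\wedge\tfrac{\delta}{\varepsilon}$, and then split the $\nu$-integral at $u$ using a second-order bound on $(0,u]$ and a first-order bound on $(u,\infty)$. The only cosmetic differences are that the paper works directly with general $x,y$ (via $|y|\le|y+\theta xz|$) instead of reducing by symmetry, and obtains the large-$z$ bound by dropping the term $xz\,\phi'_{\delta,\varepsilon}(y)\ge0$ rather than via $\int\psi_{\delta,\varepsilon}\le1$; as you noticed, your version in fact gives the inequality without the overall factor $2$.
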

\begin{proof}
	Let $x \in \real$, $y \in \real \setminus\{0\}$ with $xy \geq 0$ and $z >0$.
	By the second order Taylor's expansion for $\phi_{\delta,\varepsilon}$, it follows from \eqref{phi4} that
	\begin{align*}
	\phi_{\delta,\varepsilon}(y+xz)-\phi_{\delta,\varepsilon}(y)-xz\phi_{\delta,\varepsilon}'(y)
	&=|xz|^2 \int_{0}^{1} \theta \phi_{\delta,\varepsilon}''(y+ \theta xz)d \theta
	\leq \frac{2|xz|^2}{\log \delta} \int_{0}^{1} \frac{\theta \1_{[\varepsilon/\delta,\varepsilon]}(|y+ \theta xz|) }{|y+ \theta xz|} d \theta.
	\end{align*}
	Since $xy \geq 0$, we have $|y| \leq |y+\theta xz|$ and $\1_{[\varepsilon/\delta,\varepsilon]}(|y+ \theta xz|) \leq \1_{(0,\varepsilon]}(|y|)$.
	Hence we obtain
	\begin{align}\label{key_lem_2}
	\phi_{\delta,\varepsilon}(y+xz)-\phi_{\delta,\varepsilon}(y)-xz\phi_{\delta,\varepsilon}'(y)
	& \leq \frac{2 |xz|^2 \1_{(0,\varepsilon]}(|y|)}{ \log \delta} \left(\frac{1}{|y|} \wedge \frac{\delta}{\varepsilon}\right).
	\end{align}
	Moreover, since $xy \geq 0$, by \eqref{phi1} we have $x \phi_{\delta,\varepsilon}'(y) \geq 0$. This together with the fact that the right hand side of \eqref{key_lem_2} has $\1_{(0,\varepsilon]}(|y|)$, we obtain
	\begin{align}\label{key_lem_3}
	\phi_{\delta,\varepsilon}(y+xz)-\phi_{\delta,\varepsilon}(y)-xz\phi_{\delta,\varepsilon}'(y)
	&\leq \1_{(0,\varepsilon]}(|y|) \{\phi_{\delta,\varepsilon}(y+xz)-\phi_{\delta,\varepsilon}(y) \} \notag\\
	&=\1_{(0,\varepsilon]}(|y|) xz \int_{0}^{1}\phi_{\delta,\varepsilon}'(y+\theta xz) d \theta
	\leq \1_{(0,\varepsilon]}(|y|) |xz|.
	\end{align}
	The result then follows from \eqref{key_lem_2} and \eqref{key_lem_3}.
\end{proof}
%Lemma \ref{key_lem12} below is needed in estimating the jump terms for the Euler-Poisson scheme and Euler-Maruyama scheme.
\begin{Lem}\label{key_lem12}
	Suppose that the L\'evy measure $\nu$ satisfies $\int_0^{\infty} \{z \wedge z^2\} \nu(dz) < \infty$.
	Let $\varepsilon \in (0,1)$ and $\delta \in (1,\infty)$.
	Then for any $x,x' \in \real$, $y \in \real$ and $u \in (0,\infty]$, it holds that
	\begin{align}\label{key_lem_122}
	&\int_{0}^{\infty}\left|
		\phi_{\delta,\varepsilon}(y+xz)-\phi_{\delta,\varepsilon}(y+x'z)-(x-x')z\phi_{\delta,\varepsilon}'(y)
	\right| \nu(dz) \notag\\
	&\leq 2	\left\{ \frac{\delta ( |x-x'|^2+|x'||x-x'|)}{\varepsilon\log \delta}\int_{0}^{u} z^2 \nu(dz)
	+ |x-x'| \int_{u}^{\infty} z \nu(dz)
	\right\}.
	\end{align}
	In particular, if $x'=0$, then
	\begin{align}\label{key_lem_123}
	\int_{0}^{\infty}
	\{\phi_{\delta,\varepsilon}(y+xz)-\phi_{\delta,\varepsilon}(y)-xz\phi_{\delta,\varepsilon}'(y)\} \nu(dz)
	\leq 2	\left\{ \frac{\delta |x|^2}{\varepsilon\log \delta}\int_{0}^{u} z^2 \nu(dz)
	+ |x| \int_{u}^{\infty} z \nu(dz)
	\right\}.
	\end{align}
\end{Lem}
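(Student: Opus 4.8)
The plan is to write the integrand via a double application of the fundamental theorem of calculus, and then to bound it in two complementary ways: once through the $C^2$-bound on $\phi''_{\delta,\varepsilon}$, which is effective for small jumps, and once through the Lipschitz bound on $\phi_{\delta,\varepsilon}$ coming from $|\phi'_{\delta,\varepsilon}|\le 1$, which is effective for large jumps. Throughout I write $\phi:=\phi_{\delta,\varepsilon}$ and use only that $\phi\in C^2$ with $\|\phi'\|_\infty\le 1$ from \eqref{phi2} and $\|\phi''\|_\infty\le 2\delta/(\varepsilon\log\delta)$ from \eqref{phi4}.

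First I would fix $z>0$ and set $w_\theta:=x'z+\theta(x-x')z$ for $\theta\in[0,1]$, so that $w_0=x'z$, $w_1=xz$ and $|w_\theta|\le(|x'|+|x-x'|)z$. Writing $\phi(y+xz)-\phi(y+x'z)=(x-x')z\int_0^1\phi'(y+w_\theta)\,d\theta$ and then $\phi'(y+w_\theta)-\phi'(y)=w_\theta\int_0^1\phi''(y+\lambda w_\theta)\,d\lambda$ gives the representation
\[
\phi(y+xz)-\phi(y+x'z)-(x-x')z\,\phi'(y)=(x-x')z\int_0^1\!\!\int_0^1\phi''(y+\lambda w_\theta)\,w_\theta\,d\lambda\,d\theta .
\]
Taking absolute values, using $\|\phi''\|_\infty\le 2\delta/(\varepsilon\log\delta)$ and $|w_\theta|\le(|x'|+|x-x'|)z$, yields the small-jump bound: the integrand is at most $\tfrac{2\delta}{\varepsilon\log\delta}\bigl(|x-x'|^2+|x'||x-x'|\bigr)z^2$. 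On the other hand, the first representation together with $\|\phi'\|_\infty\le 1$ gives $|\phi(y+xz)-\phi(y+x'z)|\le|x-x'|z$, and trivially $|(x-x')z\,\phi'(y)|\le|x-x'|z$, so the integrand is also at most $2|x-x'|z$.

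To conclude, I would split $\int_0^\infty=\int_{(0,u]}+\int_{(u,\infty)}$, apply the small-jump bound on $(0,u]$ and the large-jump bound on $(u,\infty)$, and add; this is precisely \eqref{key_lem_122}. Applying the two pointwise bounds with $u=1$ and invoking Assumption \ref{Ass_1}(ii) shows the integrand is $\nu$-integrable, so no integrability issue arises; if the right-hand side is $+\infty$ there is nothing to prove. Finally, \eqref{key_lem_123} follows by setting $x'=0$ in \eqref{key_lem_122} and bounding the (signed) integrand by its modulus.

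Since the argument reduces to elementary Taylor estimates, there is no genuine obstacle; the only point requiring care is the realization that two separate a priori bounds are needed — on $\phi''_{\delta,\varepsilon}$ for the small jumps and on $\phi'_{\delta,\varepsilon}$ for the large jumps — in order to accommodate L\'evy measures $\nu$ for which $z^2$ fails to be integrable at infinity, which is exactly why the truncation level $u$ enters the statement.
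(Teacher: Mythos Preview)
Your proof is correct and follows essentially the same approach as the paper: both split the integral at the truncation level $u$, use the uniform bound $\|\phi''_{\delta,\varepsilon}\|_\infty\le 2\delta/(\varepsilon\log\delta)$ together with a Taylor/mean-value representation on $(0,u)$, and use $\|\phi'_{\delta,\varepsilon}\|_\infty\le 1$ on $[u,\infty)$. The only cosmetic differences are that the paper reaches the small-jump bound via a triangle-inequality split (Taylor around $y+x'z$ plus a separate mean-value step for $\phi'(y+x'z)-\phi'(y)$) rather than your single double-integral representation, and for \eqref{key_lem_123} the paper observes that the integrand is nonnegative by convexity whereas you simply bound it by its modulus; both are equivalent for the stated inequality.
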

\begin{proof}
	For $z \in (0,u)$, from the second order Taylor's expansion for $\phi_{\delta,\varepsilon}$ and mean value theorem applied to $\phi_{\delta,\varepsilon}'$, we obtain from \eqref{phi4},
	\begin{align*}
	&\left| \phi_{\delta,\varepsilon}(y+xz)-\phi_{\delta,\varepsilon}(y+x'z)-(x-x')z\phi_{\delta,\varepsilon}'(y) \right|\\
	&\leq \left| \phi_{\delta,\varepsilon}(y+xz)-\phi_{\delta,\varepsilon}(y+x'z)-(x-x')z\phi_{\delta,\varepsilon}'(y+x'z) \right|
	+|x-x'||z|\left| \phi_{\delta,\varepsilon}'(y)-\phi_{\delta,\varepsilon}'(y+x'z) \right|\\
	&\leq |x-x'|^2 |z|^2 \int_{0}^{1} \theta \phi_{\delta,\varepsilon}''(y+ \theta xz+(1-\theta)x'z)d \theta
	+|x'| |x-x'||z|^2 \int_{0}^{1} \phi_{\delta,\varepsilon}''(y+ \theta x'z)d \theta\\
	&\leq \left\{ |x-x'|^2 |z|^2+|x'| |x-x'||z|^2 \right\}\frac{2\delta }{\varepsilon \log \delta}.
	\end{align*}	
	For the $z \in [u,\infty)$, apply mean value theorem to $\phi_{\delta, \varepsilon}$, 
	\begin{align*}
	& \left|	\phi_{\delta,\varepsilon}(y+xz)-\phi_{\delta,\varepsilon}(y+x'z)-(x-x')z\phi_{\delta,\varepsilon}'(y)
	\right|\\
	& \qquad  = |x-x'| |z| \int_{0}^{1} \left|\phi'_{\delta,\varepsilon}(y+\theta xz+(1-\theta)x'z)-\phi'_{\delta,\varepsilon}(y) \right| d\theta
	\leq 2 |x-x'| |z|.
	\end{align*}
	%Note that $\phi'$ is increasing since $\phi''\geq 0$. This implies that 
	%if $x>0$, then $0\leq (\phi'_{\delta,\varepsilon}(v^*)-\phi_{\delta,\varepsilon}'(y))$ and if $x\leq 0$, then $(\phi'_{\delta,\varepsilon}(v^*)-\phi_{\delta,\varepsilon}'(y))\leq 0$. 
	%$x \{\phi'_{\delta,\varepsilon}(y+\theta xz)-\phi'_{\delta,\varepsilon}(y) \} \geq 0$.
	%Therefore, by \eqref{phi2}, we have
	%\begin{align*}
	%\phi_{\delta,\varepsilon}(y+xz)-\phi_{\delta,\varepsilon}(y)-xz\phi_{\delta,\varepsilon}'(y)
	%& = |x|z \int_{0}^{1} \left|\phi'_{\delta,\varepsilon}(y+\theta xz)-\phi'_{\delta,\varepsilon}(y) \right| d\theta
	%\leq 2|x|z,
	%\end{align*}
	This concludes the proof of \eqref{key_lem_122}. In the case where $x'=0$, then since $\phi''_{\delta,\varepsilon} \geq 0$, we have
	\begin{align*}
	\phi_{\delta,\varepsilon}(y+xz)-\phi_{\delta,\varepsilon}(y)-xz\phi_{\delta,\varepsilon}'(y)
	&=|xz|^2 \int_{0}^{1} \theta \phi_{\delta,\varepsilon}''(y+ \theta xz)d \theta\
	\geq 0,
	\end{align*}
	which concludes the proof of \eqref{key_lem_123}.
\end{proof}

\vskip 5pt 
\begin{Rem}\label{key_lem13}
Suppose that the L\'evy measure $\nu$ satisfies the condition $\int_1^{\infty} z^2 \nu(dz) < \infty$ then one can take $u = \infty$ in Lemma \ref{key_lem12} and the right hand side of \eqref{key_lem_122} and \eqref{key_lem_123} are still finite.
\end{Rem}

\section{Strong Rate of Convergence}\label{L1}
\subsection{The Non-Square Integrable Case}\label{L1}
In this subsection, we compute the strong rate of convergence in the case where $L$ is a non-square integrable. The typical example one should keep in mind is when the L\'evy measure $\nu$ is spectrally positive $\alpha$-stable with $\alpha \in [1,2]$.

%\begin{proof}
%	We remove the hitting error $n^{-1/4}$ and replace $\log(n)/n$ by $1/n$ for the case $\gamma \in (1/2,1]$.
%\end{proof}

\begin{Lem}\label{EM_esti_1}
	Suppose that Assumption \ref{Ass_1} holds and $h$ is bounded.
	\begin{itemize}
		\item[(i)]
		There exists $C_1>0$ depend on $x_0$, $K$, $T$ and $\|h\|_{\infty}$ such that for any $t \in [0,T]$,
		\begin{align}\label{EM_esti_2}
		\e\big[\, \sup_{t\leq T}|{X}_{t}^{(n)}|\,\big]
		\leq C_1.
		\end{align}
		\item[(ii)]
		There exists $C_2>0$ depend on $x_0$, $C_1$, $K$, $T$ and $\|h\|_{\infty}$ such that for any $t \in [0,T]$,
		\begin{align}
		\e[|{X}_{t}^{(n)}-{X}_{\eta_{n}(t)}^{(n)}|]
		\leq C_2 \left( \frac{1}{n}\right)^{1/2}. \notag
		\end{align}
		
	\end{itemize}
\end{Lem}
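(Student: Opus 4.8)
The plan is to establish (i) and (ii) by the standard moment estimates for the Euler--Maruyama scheme, adapted to the jump setting. For part (i), I would start from the continuous-time definition
\[
X^{(n)}_t = x_0 + \int_{(0,t]} b(X^{(n)}_{\eta_n(s)})\,ds + \int_{(0,t]} \sigma(X^{(n)}_{\eta_n(s)})\,dW_s + \int_{(0,t]} h(X^{(n)}_{\eta_n(s)})\,dL_s,
\]
take absolute values, supremum over $[0,t]$, and expectations. The drift term is bounded by $\int_0^t K(1+|X^{(n)}_{\eta_n(s)}|)\,ds$ using the linear growth bound $|b(x)|\le K_3(1+|x|)$. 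For the Brownian term I would apply the Burkholder--Davis--Gundy inequality together with the linear growth of $\sigma$; since $\sigma$ need not be bounded here, BDG gives $\e[\sup_{s\le t}|\int_0^s \sigma(X^{(n)}_{\eta_n(r)})dW_r|] \le C\,\e[(\int_0^t \sigma(X^{(n)}_{\eta_n(r)})^2 dr)^{1/2}] \le C\,\e[(\int_0^t K^2(1+|X^{(n)}_{\eta_n(r)}|)^2 dr)^{1/2}]$, which after Jensen/Minkowski is controlled by $C(1 + (\int_0^t \e[\sup_{u\le r}|X^{(n)}_u|^2]dr)^{1/2})$ — but to keep things at the level of first moments only, I would instead use $\e[\sup|\cdot|]\le C\,\e[\int_0^t\sigma^2]^{1/2}$ and bound this by $C + C\int_0^t \e[\sup_{u\le r}|X^{(n)}_u|]\,dr$ after noting $(1+|x|)^2\le 2+2|x|$ is false, so more carefully $(1+a)^2 \le 2(1+a^2)$ and then handle the square root; alternatively work with second moments throughout part (i) and deduce the first-moment bound at the end. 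The jump term is the delicate one: writing $dL_s = \int_0^\infty z\,\widetilde N(ds,dz)$, the integral $\int_{(0,t]} h(X^{(n)}_{\eta_n(s)})\,dL_s$ is a martingale only under square-integrability of $L$, which we do \emph{not} assume here; this is exactly why $h$ is assumed bounded. With $\|h\|_\infty<\infty$ one can bound $|\int_{(0,t]}h(X^{(n)}_{\eta_n(s)})dL_s|\le \|h\|_\infty \cdot(\text{something involving } L)$ only if $h$ has constant sign relative to $L$'s jumps; more robustly, I would split $L = L^{\le 1} + L^{>1}$ into small and large jumps, treat the compensated small-jump part via BDG using $\int_0^1 z^2\nu(dz)<\infty$ and $\|h\|_\infty$, and treat the large-jump part pathwise: $\e[\sup_{s\le t}|\int_{(0,s]}\int_{z>1} h(X^{(n)}_{\eta_n(r)})z\,N(dr,dz)|] \le \|h\|_\infty\,\e[\int_0^t\int_{z>1}z\,N(dr,dz)] = \|h\|_\infty\,t\int_{z>1}z\,\nu(dz)$, which is finite by Assumption \ref{Ass_1}(ii). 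Assembling these and invoking Gr\"onwall's inequality on $u\mapsto \e[\sup_{s\le u}|X^{(n)}_s|]$ yields the constant $C_1$ depending only on $x_0$, $K$, $T$, $\|h\|_\infty$.

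For part (ii), fix $t$ and write $t_j=\eta_n(t)$, so that
\[
X^{(n)}_t - X^{(n)}_{\eta_n(t)} = \int_{(t_j,t]} b(X^{(n)}_{t_j})\,ds + \int_{(t_j,t]} \sigma(X^{(n)}_{t_j})\,dW_s + \int_{(t_j,t]} h(X^{(n)}_{t_j})\,dL_s.
\]
Since $t - t_j \le T/n$, the drift term has first moment at most $K\,\e[1+|X^{(n)}_{t_j}|]\cdot (T/n) \le K(1+C_1)T/n$. The Brownian term: $\e[|\int_{(t_j,t]}\sigma(X^{(n)}_{t_j})dW_s|] \le \e[|\sigma(X^{(n)}_{t_j})|\,|W_t - W_{t_j}|] \le \e[|\sigma(X^{(n)}_{t_j})|^2]^{1/2}(t-t_j)^{1/2} \le K(1+\e[|X^{(n)}_{t_j}|^2])^{1/2}(T/n)^{1/2}$ — this forces a second-moment estimate, so I would strengthen part (i) to $\e[\sup_{t\le T}|X^{(n)}_t|^2] \le C_1'$ (the same argument goes through with $p=2$, using BDG and, for the large-jump part, $\int_{z>1}z^2\nu(dz)$; note Remark \ref{key_lem13} and Remark \ref{remark1.2}(ii) — but the latter requires $\int_1^\infty z^2\nu(dz)<\infty$ which may fail in the $\alpha$-stable case with $\alpha<2$!). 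This is the main subtlety: in the genuinely non-square-integrable case, $\e[|X^{(n)}_t|^2]=\infty$, so the Brownian term must be handled differently. I would instead use $\e[|\sigma(X^{(n)}_{t_j})|\,|W_t-W_{t_j}|] = \e[\,\e[|\sigma(X^{(n)}_{t_j})|\,|W_t-W_{t_j}|\,|\,\F_{t_j}]\,] = \e[|\sigma(X^{(n)}_{t_j})|]\cdot\e[|W_t-W_{t_j}|] \le K(1+C_1)\sqrt{2/\pi}\,(T/n)^{1/2}$, exploiting independence of increments and the already-established first moment bound $C_1$ — this is clean and needs no second moments. The jump term is handled analogously via the small/large jump split: the large-jump part conditioned on $\F_{t_j}$ gives $\e[|h(X^{(n)}_{t_j})|\cdot\int_{(t_j,t]}\int_{z>1}z\,N] = \e[|h(X^{(n)}_{t_j})|]\cdot(t-t_j)\int_{z>1}z\,\nu(dz) \le \|h\|_\infty\,(T/n)\int_{z>1}z\,\nu(dz)$, and the compensated small-jump part, being an $L^2$-martingale increment with $\|h\|_\infty$ bounded integrand, has second moment $\le \|h\|_\infty^2(t-t_j)\int_0^1 z^2\nu(dz)$, hence first moment $\le \|h\|_\infty(T/n)^{1/2}(\int_0^1z^2\nu(dz))^{1/2}$.

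Collecting the three contributions, the dominant order is $(T/n)^{1/2}$ coming from the Brownian and small-jump martingale parts (the drift and large-jump parts contribute only $O(1/n)$, which is $o(n^{-1/2})$), giving $\e[|X^{(n)}_t - X^{(n)}_{\eta_n(t)}|] \le C_2 n^{-1/2}$ with $C_2$ depending on $x_0$, $C_1$, $K$, $T$, $\|h\|_\infty$ as claimed. The main obstacle I anticipate is ensuring that part (i) and the Brownian estimate in part (ii) are carried out using only first moments (via conditioning on $\F_{\eta_n(t)}$ and independence of increments) rather than inadvertently invoking second moments, since the whole point of this subsection is that $L$ — and therefore potentially $X^{(n)}$ — is not square integrable; the boundedness of $h$ is precisely what keeps the jump contributions under control without any integrability beyond Assumption \ref{Ass_1}(ii).
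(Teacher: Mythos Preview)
Your argument for (ii) is essentially correct and close to the paper's. The paper is slightly more direct: it bounds the jump increment pathwise by $\|h\|_\infty\,|L_t-L_{\eta_n(t)}|$ and then invokes Assumption~\ref{Ass_1}(i), i.e.\ $\e[|L_t-L_{\eta_n(t)}|]\le K_0|t-\eta_n(t)|^{\zeta}$ with $\zeta\ge 1/2$, instead of your small/large-jump split, but either route gives the $n^{-1/2}$ bound. The Brownian increment is handled exactly as you propose, via independence of $W_t-W_{\eta_n(t)}$ from $\F_{\eta_n(t)}$ and the first-moment bound from (i).

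For (i), however, there is a genuine gap that you yourself flag but do not close. After BDG on the Brownian integral you face (schematically)
\[
\e\Big[\sup_{s\le t}\Big|\int_0^s \sigma(X^{(n)}_{\eta_n(r)})\,dW_r\Big|\Big]
\;\le\; c_0\,\Big(\int_0^t \e\big[(1+|X^{(n)}_{\eta_n(s)}|)^2\big]\,ds\Big)^{1/2},
\]
which involves \emph{second} moments of $X^{(n)}$; in the non-square-integrable regime these may be infinite, so ``work with second moments throughout'' is not available, and ordinary Gr\"onwall does not apply to a square-root functional of $s\mapsto \e[\sup_{u\le s}|X^{(n)}_u|]$. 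The paper resolves this in two steps. First it localizes: it picks stopping times $T_m\uparrow\infty$ with $|X^{(n)}_{-}|$ bounded on $[0,T_m]$, so all the relevant integrals are a priori finite. Second, for $V(t):=\e[\sup_{s\le t}|X^{(n)}_{s\wedge T_m}|]$ one obtains an inequality of the form
\[
V(t)\;\le\; C + K\int_0^t V(s)\,ds + c_0\Big(\int_0^t V(s)^2\,ds\Big)^{1/2},
\]
which is handled not by Gr\"onwall but by the nonlinear Gr\"onwall-type Lemma~3.2(i) of Gy\"ongy--R\'asonyi \cite{GyRa} (with $p=1$, $q=2$), producing a bound independent of $m$; monotone convergence then gives \eqref{EM_esti_2}. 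Your small/large-jump decomposition for the $dL$ integral is a valid alternative to the paper's direct BDG estimate $\le c_1^2\|h\|_\infty\,\e[\sup_{s\le T}|L_s|]$ (finite by Remark~\ref{remark1.2}(ii)); both reduce the jump contribution to a fixed constant $\lambda$. The missing ingredient is precisely the localization plus the Gy\"ongy--R\'asonyi lemma to absorb the square-root term coming from the Brownian part.
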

\begin{proof}
To prove $(i)$, we aim to apply Lemma 3.2 of Gy\"ongy and R\'asonyi \cite{GyRa}. To bound the stochastic integral against $L$, we note that by Theorem 7.30 of He et al. \cite{HWY}, there exists a localizing sequence of stopping times $(T_m)_{m\in \mathbb{N}}$ with $T_m\uparrow \infty$ such that 
$\int_{0}^{t} h({X}_{\eta_{n}(s)}^{(n)})dL^{T_m}_s  \in \mathcal{H}^1$, where $\mathcal{H}^1$ is the martingale Hardy space. By applying the Burkholder-Davis-Gundy inequality, see for example Theorem 10.36 of He et al. \cite{HWY}, we obtain
\begin{align*}
\e\Big[\,\sup_{t\leq T} \, \Big|\int_{0}^{t} h({X}_{\eta_{n}(s)}^{(n)})dL^{T_m}_s \Big| \,\Big] 
& \leq c_1 \e\Big[\, \Big\{ \, \int_{0}^{T} |h({X}_{\eta_{n}(s)}^{(n)})|^2d[L]^{T_m}_s \,\Big\}^{1/2}\, \Big]\\
& \leq c_1^2 \|h\|_{\infty}\e\Big[ \sup_{s\leq T\wedge T_m} |L_{s}|\,\Big],
\end{align*}
for some $c_1>0$. The right hand side above is bounded by $\lambda:= c_1^2 \|h\|_{\infty} \e\big[ \sup_{s\leq T} |L_{s}|\big]<\infty$ for all $m \in \n$. To take the limit as $m\rightarrow \infty$ in the above inequalities we note that 
\begin{gather*}
\e\Big[\,\sup_{t\leq T} \, \Big|\int_{0}^{t} h({X}_{\eta_{n}(s)}^{(n)})dL^{T_m}_s \Big| \,\Big] = \e\Big[\,\sup_{t\leq T\wedge {T_m}} \, \Big|\int_{0}^{t} h({X}_{\eta_{n}(s)}^{(n)})dL_s \Big| \,\Big]
\end{gather*}
and monotone convergence theorem can be applied.

To estimate the time integral and the Brownian integral we proceed similarly to Remark 3.2 of \cite{GyRa}, however we have to pay extra attention as $X^{(n)}$ is not continuous. Using left continuity of ${X}_{-}^{(n)}$, there exists a localizing sequence $(T_m)_{m\in \mathbb{N}}$ such that $|{X}_{-}^{(n)}|$ when stopped at $T_m$ is bounded and the Brownian integral is a martingale.
By applying the Burkholder-Davis-Gundy inequality, linear growth condition on $\sigma$ and Jensen's inequality, we obtain
	\begin{align*}
		\e\Big[\,\sup_{t\leq T} \, \Big| \int_{0}^{t} \sigma({X}_{\eta_{n}(s)}^{(n)}) dW_s^{T_m} \Big|		\Big]
		\leq
		c_0 \Big\{
		\e \Big[\int_{0}^{T\wedge T_m} \Big( 1+ \sup_{u\leq s}|{X}_{\eta_{n}(u)}^{(n)}|^2\,\,\Big)ds\Big]
		\Big\}^\frac{1}{2}.
	\end{align*}
Using the linear growth condition on $b$ and the fact that for each $m\in \mathbb{N}$, there exists a constant $C_m$ such that $\sup_{u\leq s\wedge T_m} |{X}_{\eta_n(u)}^{(n)}| \leq  \sup_{u< s\wedge T_m} |{X}_{u}^{(n)}|\leq C_m$, we obtain
	\begin{align}
\e \Big[\,	\sup_{t\leq T}|X_{t\wedge T_m}^{(n)}|\,\Big]
 	& \leq |x_0| +\lambda + KT
 	+ K\e\Big[ \int_{0}^{T\wedge T_m}\,\sup_{u\leq s} |{X}_{\eta_n(u)}^{(n)}|\,ds \, \Big]
 	+ c_0 \Big\{\e\Big[\int_{0}^{T\wedge T_m} \big( 1+ \sup_{u\leq s}|{X}_{\eta_{n}(u)}^{(n)}|^2\,\,\big)ds\Big]\Big\}^\frac{1}{2}\nonumber \\
 	& \leq C_{T,x_0}
 	+ K\e\Big[\int_{0}^{T\wedge T_m} \,\sup_{u< s} |{X}_{u}^{(n)} |\,ds \, \Big]
 	+ c_0 \Big\{\e\Big[\int_{0}^{T\wedge T_m} \sup_{u< s}|{X}_{u}^{(n)}|^2\,ds\,\Big]\Big\}^\frac{1}{2} 
 	< \infty\label{l2.1}
		\end{align}
%\textcolor{red}{PROBLEM! $\sup_{s< t} X_{s\wedge T_m} \neq \sup_{s< t\wedge T_m} X_{s}$ (take $T_m < t$ and suppose $X$ jump at $T_m$), the equality $\sup_{s\leq t} X_{s\wedge T_m} = \sup_{s \leq t\wedge T_m} X_{s}$ is true, check! The order which you prove this is not good. Be careful the process is not continuous!}
where $C_{T,x_0}:=|x_0|+\lambda+ KT + c_0 \sqrt{T}$. Using the fact that $X^{(n)}$ is a c\`	adl\`ag process and we replace $\sup_{u< s}|{X}_{u}^{(n)}|$ by $\sup_{u\leq s}|{X}_{u}^{(n)}|$ in the Lebesgue integral, equation \eqref{l2.1} can be estimated by
\begin{align*}
\e \Big[\,	\sup_{t\leq T}|X_{t\wedge T_m}^{(n)}|\,\Big]
 	& \leq C_{T,x_0} + K\e\Big[\int_{0}^{T} \,\sup_{u \leq s} |{X}_{u\wedge T_m}^{(n)} |\,ds \, \Big]
 	+ c_0\Big\{\e\Big[\int_{0}^{T} \sup_{u\leq s}|{X}_{u\wedge T_m}^{(n)}|^2\,ds\,\Big]\Big\}^\frac{1}{2}.
\end{align*}
Then it follows from Lemma 3.2 (i) of \cite{GyRa} with $p =1$, $q = 2$ and $V(t)=Z(t)=\sup_{u \leq t} |X_{u\wedge T_m}^{(n)}|$ that there exists $C_T$ such that
\begin{align*}
\e \Big[\,	\sup_{t\leq T}|X_{t\wedge T_m}^{(n)}|\,\Big]
& \leq C_{T,x_0} C_T.
\end{align*}
Hence the result follows from an application of the monotone convergence theorem.
%%Also from Lemma 3.2 (i) of \cite{GyRa} we know that the constant $K(T\wedge T_n)$ is increasing in the time parameter and the result follows from an application of the monotone convergence theorem.

To prove $(ii)$, we note that the coefficients $b$, $\sigma$ satisfies the linear growth condition and $h$ is bounded, then
	\begin{align*}
	|{X}_{t}^{(n)}-{X}_{\eta_{n}(t)}^{(n)}|
%	&\leq 
%	|b({X}_{\iota_{n}(t)}^{(n)})| |t-\eta_{n}(t)|
%	+|\sigma(X_{\eta_{n}(t)}^{(n)})| |W_{t}-W_{\eta_{n}(t)}|
%	+|h({X}_{\iota_{n}(t)}^{(n)}) |L_{t}-L_{\eta_{n}(t)}|\\
	& \leq
	K(1+|{X}_{\eta_{n}(t)}^{(n)}|)
	\left(|t-\eta_{n}(t)|
	+ |W_{t}-W_{\eta_{n}(t)}|
	\right)
	+ \|h\|_{\infty}|L_{t}-L_{\eta_{n}(t)}|.
	\end{align*}
From \eqref{EM_esti_2} and Assumption \eqref{Ass_1}-(i), we have
\begin{align*}
	\e[|{X}_{t}^{(n)}-{X}_{\eta_{n}(t)}^{(n)}|]
	&\leq M_1 \big(|t-\eta_{n}(t)| + |t-\eta_{n}(t)|^{1/2} + |t-\eta_{n}(t)|^{\zeta} \big),\\
	& \leq 3T M_1 	\left(\frac{1}{n}\right)^\frac{1}{2},
\end{align*}
where the constant $M_1$ is given by
\begin{align*}
	M_1
	:=\max\left\{K(1+C_1)(1 \vee \sqrt{2 \pi^{-1}}), \|h\|_{\infty}K_0 \right\}.
\end{align*}
%Finally, we use the assumption that $\mathbb{E}[|L_t-L_{s}|] \leq K_0|t-s|^\zeta$ for some $\frac{1}{2}\leq  \zeta \leq 1$ to obtain
%Finally, we use the Jensen inequality to obtain
%\begin{align*}
%&M_1 \left\{\mathbb{E}[|t-\eta_{n}(t)|] + \mathbb{E}[|t-\eta_{n}(t)|^{1/2}] + \mathbb{E}[|t-\eta_{n}(t)|^{\zeta}] \right\}\\
%&\leq M_1  \left\{ K_A\frac{1}{n}+ \left(K_A\frac{1}{n}\right)^\frac{1}{2}+\left(K_A\frac{1}{n}\right)^\zeta\,\right\}
%\leq 3 M_1 (K_A \vee 1) \left(\frac{1}{n}\right)^\frac{1}{2}.
%\end{align*}
This concludes the proof.
\end{proof}

%\begin{Rem}
%If the process $L$ is square integrable, then $h$ can also be taken to be of linear growth and there exists $C_2>0$ depend on $r$, $K$ and $T$ such that for any $t \in [0,T]$,
%	\begin{align*}
%	\e[|X_{t}^{(n)}-X_{\eta_{n}(t)}^{(n)}|^2]
%%	\leq C_2\left\{\mathbb{E}[|t-\eta_{n}(t)|] + \mathbb{E}[|t-\eta_{n}(t)|^2]\right\}
%	\leq \frac{C_2\log n}{n}.
%	\end{align*}	
%The assumption that $h$ is bounded can likely be relaxed if one considers specific class of L\'evy processes.
%\end{Rem}
%

From Theorem 2.2 in \cite{LiMy}, under Assumption \ref{Ass_1} (and the assumption that $\sigma$ and $h$ are bounded) there exists a unique strong solution to the SDE \eqref{SDE_1}. We now present our first result on the rate of convergence for  the Euler-Maruyama scheme.

\begin{Thm}\label{main_1}
Suppose that Assumption \ref{Ass_1} holds and $\sigma$, $h$ are bounded. Then there exists $C_3>0$ depending on $x_0$, $K$, $T$, $\rho$, $\gamma$, $\beta$, $\|\sigma\|_{\infty}$ and $\|h\|_{\infty}$ such that for any $\varepsilon \in (0,\frac{1}{1-\beta}-\alpha_{\nu})$,
	\begin{align*}
	\sup_{0 \leq t \leq T}\e[|X_{t}-X_{t}^{(n)}|]
	\leq C_3
	\left\{ \begin{array}{ll}
	\displaystyle
	n^{-\rho/2}
	+ n^{-\frac{\beta}{2} \left(1-\frac{1}{2\gamma}\right)}
	&\gamma \in (1/2,1],\,\alpha_{\nu}<\frac{2(1-\gamma)}{1-\beta},\\
	\displaystyle
	n^{-\rho/2}
	+ n^{-\frac{\beta}{2} \left(1-\frac{1}{2-(\alpha_{\nu}+\varepsilon)(1-\beta)}\right)}
	&\gamma \in (1/2,1],\,\alpha_{\nu}\geq \frac{2(1-\gamma)}{1-\beta},\\
	\displaystyle
	(\log n)^{-1}
	&\gamma=1/2.
	\end{array}\right.
	\end{align*}	
Moreover, if $\nu(dz)$ is defined by
		\begin{align}\label{stable_meas}
			\nu(dz)=\frac{\1_{(0,\infty)} (z) \mu(z)}{z^{1+\alpha}} dz,
		\end{align}
for some $\alpha \in (1,2)$ and bounded measurable function $\mu$ then the above $\varepsilon$ can be chosen as zero and $\alpha_{\nu}=\alpha$.
\end{Thm}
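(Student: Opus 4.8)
The plan is to estimate $\e[|X_t - X_t^{(n)}|]$ via the Yamada--Watanabe function $\phi_{\delta,\varepsilon}$. Write $Y_t := X_t - X_t^{(n)}$. Applying Itô's formula to $\phi_{\delta,\varepsilon}(Y_t)$ and using \eqref{phi3} to recover $|Y_t|$, we get
\begin{align*}
\e[|Y_t|] \le \varepsilon + \e[\phi_{\delta,\varepsilon}(Y_t)]
\end{align*}
and $\e[\phi_{\delta,\varepsilon}(Y_t)]$ decomposes into a drift term (from $b$), a second-derivative term (from the Brownian integral, involving $\phi_{\delta,\varepsilon}''$), and a jump term (from $L$). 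For the drift I would split $b = b_1 + b_2$: the Lipschitz part $b_1$ contributes $K\int_0^t \e[|Y_s|]\,ds$ plus a one-step error $K\int_0^t \e[|X_s^{(n)} - X_{\eta_n(s)}^{(n)}|]\,ds \lesssim n^{-1/2}$ by Lemma \ref{EM_esti_1}(ii); the non-increasing $\rho$-Hölder part $b_2$ has a favorable sign when paired with $\phi_{\delta,\varepsilon}'(Y_s)$ on the true-solution argument, and the mismatch in argument contributes $\lesssim \e[|X_s^{(n)} - X_{\eta_n(s)}^{(n)}|^\rho] \lesssim n^{-\rho/2}$ (by Jensen). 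The Brownian term, using \eqref{phi4} and $\gamma$-Hölder continuity of $\sigma$, is bounded by $\frac{2\delta}{\varepsilon\log\delta}\int_0^t \e[|\sigma(X_s)-\sigma(X_{\eta_n(s)}^{(n)})|^2]\,ds \lesssim \frac{\delta}{\varepsilon\log\delta}\big(\int_0^t \e[|Y_s|^{2\gamma}]ds + n^{-\gamma}\big)$, and I would bound $\e[|Y_s|^{2\gamma}]$ above using $\e[|Y_s|]$ and boundedness of $\sigma,h$ (which gives uniform moment bounds on $Y$, so $\e[|Y_s|^{2\gamma}] \lesssim \e[|Y_s|]^{2\gamma}$ is not quite right — rather $\e[|Y_s|^{2\gamma}] \le \e[|Y_s|]^{2\gamma}$ fails; instead use $|Y_s|^{2\gamma} \le |Y_s|$ when $|Y_s|\le 1$ and boundedness otherwise, or interpolate — this needs care, cf. the $\gamma=1/2$ case where $2\gamma=1$ directly).

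For the jump term, the key inputs are Lemma \ref{key_lem_0} and Lemma \ref{key_lem12}. Here I would use that $h$ is non-decreasing (Assumption \ref{Ass_1}(v)): writing the jump increment as $h(X_{s-})\,dL - h(X_{\eta_n(s)}^{(n)})\,dL$, the "diagonal" part $h(X_{s-}) - h(X_{s-}^{(n)})$ has the same sign as $Y_{s-}$, so Lemma \ref{key_lem_0} applies with $x = h(X_{s-}) - h(X_{s-}^{(n)})$, $y = Y_{s-}$, $xy \ge 0$; this yields a bound
\begin{align*}
\frac{2}{\log\delta}\Big(\frac{1}{|Y_{s-}|}\wedge\frac{\delta}{\varepsilon}\Big)|h(X_{s-})-h(X_{s-}^{(n)})|^2 \int_0^u z^2\nu(dz) + |h(X_{s-})-h(X_{s-}^{(n)})|\int_u^\infty z\,\nu(dz),
\end{align*}
and the off-diagonal part $h(X_{s-}^{(n)}) - h(X_{\eta_n(s)}^{(n)})$ is handled by Lemma \ref{key_lem12} (with $x' = h(X_{s-})-h(X_{\eta_n(s)}^{(n)})$, say), giving terms in $|h(X_{s-}^{(n)})-h(X_{\eta_n(s)}^{(n)})|^\beta$-type quantities controlled by $n^{-\beta/2}$. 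Using $\beta$-Hölder continuity, $|h(X_{s-})-h(X_{s-}^{(n)})|^2 \lesssim |Y_{s-}|^{2\beta}$, and $\frac{1}{|Y_{s-}|}\wedge\frac{\delta}{\varepsilon}$ times $|Y_{s-}|^{2\beta}$ is bounded by $(\delta/\varepsilon)^{1-2\beta}$ roughly, or $\varepsilon^{2\beta-1}$ — one optimizes. The truncation level $u$ controls $\int_0^u z^2\nu(dz)$ and $\int_u^\infty z\,\nu(dz)$: by the definition of $\alpha_\nu$ and Remark \ref{remark1.2}(i), $\int_0^u z^2\nu(dz) \lesssim u^{2-\alpha_\nu-\varepsilon'}$ and $\int_u^\infty z\,\nu(dz) \lesssim u^{1-\alpha_\nu-\varepsilon'}$ for $u$ small (or exactly these powers in the stable case \eqref{stable_meas}, where $\varepsilon'$ can be taken $0$).

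After assembling these pieces, I arrive at an integral inequality of the form
\begin{align*}
\e[|Y_t|] \le \varepsilon + C\Big(n^{-\rho/2} + \frac{\delta}{\varepsilon\log\delta}n^{-\gamma} + A(\delta,\varepsilon,u) + B(\delta,\varepsilon,u)n^{-\beta/2}\Big) + C\int_0^t \e[|Y_s|]\,ds,
\end{align*}
where $A, B$ collect the $\delta,\varepsilon,u$-dependent constants from the Brownian and jump second-order terms (with $A$ carrying the $\frac{1}{\log\delta}$ gain and $B$ carrying $\frac{\delta}{\varepsilon\log\delta}$); Grönwall's inequality then gives $\sup_{t\le T}\e[|Y_t|] \lesssim$ (that bracket). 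The final — and genuinely delicate — step is the optimization over the three free parameters $\varepsilon \in (0,1)$, $\delta \in (1,\infty)$, and $u \in (0,\infty)$ as functions of $n$. For $\gamma > 1/2$ one chooses $\delta$ a large fixed power (or $n^\theta$), balances $\varepsilon$ against the jump term using the $2\beta - 1 > 1 - 2/\alpha_\nu$ condition encoded in $\beta \in (1-1/\alpha_\nu, 1)$, and picks $u = u(n)$ to equate $u^{2-\alpha_\nu}\cdot(\text{stuff})$ with $u^{1-\alpha_\nu}n^{-\beta/2}$; the two regimes $\alpha_\nu \lessgtr \frac{2(1-\gamma)}{1-\beta}$ correspond to whether the Brownian exponent $\frac{\gamma}{2}$-type rate or the jump exponent dominates, which is why the exponent $\frac{\beta}{2}(1 - \frac{1}{2\gamma})$ versus $\frac{\beta}{2}(1-\frac{1}{2-(\alpha_\nu+\varepsilon)(1-\beta)})$ appears. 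For $\gamma = 1/2$ the Brownian term cannot be beaten below $(\log\delta)^{-1}$ after choosing $\varepsilon = \delta/(\text{something})$, forcing $\delta \to \infty$ like a power of $n$ and yielding the $(\log n)^{-1}$ rate. The main obstacle I anticipate is precisely this simultaneous three-parameter optimization while tracking how each term's constant blows up; in particular ensuring the exponent $2 - \alpha_\nu - \varepsilon > 0$ stays positive (hence the constraint $\varepsilon \in (0,\frac{1}{1-\beta}-\alpha_\nu)$, which also guarantees $(\alpha_\nu+\varepsilon)(1-\beta) < 1$ so the second-regime exponent is well-defined and positive), and verifying that in the stable case \eqref{stable_meas} the $\varepsilon'$-slack in the L\'evy tail estimates vanishes so one may set $\varepsilon = 0$.
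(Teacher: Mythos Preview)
Your overall architecture matches the paper exactly: It\^o's formula for $\phi_{\delta,\varepsilon}(Y_t)$, the drift handled via the sign trick for $b_2$, the jump compensator split into a ``diagonal'' piece (Lemma~\ref{key_lem_0} with $x=h(X_{s-})-h(X_{s-}^{(n)})$, $y=Y_{s-}$) and an ``off-diagonal'' piece (Lemma~\ref{key_lem12}), then Gronwall and parameter optimisation. The jump estimates you sketch are essentially correct; in particular the paper takes $u=\varepsilon^{1-\beta}$ in the diagonal jump bound, which turns the two terms of \eqref{EP_2} into a single $\varepsilon^{1-\widehat\alpha(1-\beta)}$ (your guess ``$\varepsilon^{2\beta-1}$'' is the intermediate step).

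The genuine gap is the Brownian second-order term $J_t^{n,\delta,\varepsilon}$, precisely where you write ``this needs care''. Your bound $\phi_{\delta,\varepsilon}'' \le \tfrac{2\delta}{\varepsilon\log\delta}$ leaves you with $\int_0^t \e[|Y_s|^{2\gamma}]\,ds$, which does \emph{not} close under Gronwall, and your suggested fixes (interpolation, $|Y|\le 1$ cases) do not produce the required $\varepsilon^{2\gamma-1}/\log\delta$ term. The paper instead uses the \emph{sharper} bound in \eqref{phi4}, namely $\phi_{\delta,\varepsilon}''(y)\le \tfrac{2}{|y|\log\delta}\1_{[\varepsilon/\delta,\varepsilon]}(|y|)$: applied to $|\sigma(X_s)-\sigma(X_s^{(n)})|^2\le K^2|Y_s|^{2\gamma}$ this gives $\tfrac{2K^2}{\log\delta}|Y_s|^{2\gamma-1}\1_{[\varepsilon/\delta,\varepsilon]}(|Y_s|)\le \tfrac{2K^2\varepsilon^{2\gamma-1}}{\log\delta}$, eliminating the $Y$-dependence entirely. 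For the one-step piece $|\sigma(X_s^{(n)})-\sigma(X_{\eta_n(s)}^{(n)})|^2$ the paper uses boundedness of $\sigma$ to write $|\sigma-\sigma|^2\le (2\|\sigma\|_\infty)^{2-1/\gamma}|\sigma-\sigma|^{1/\gamma}\le C|X_s^{(n)}-X_{\eta_n(s)}^{(n)}|$, so only the \emph{first} moment from Lemma~\ref{EM_esti_1}(ii) is needed (your $n^{-\gamma}$ would require a $2\gamma$-moment that is not available here). The same indicator trick is what makes the $K^{n,\delta,\varepsilon,1}$ bound work. Finally, note that for $\gamma\in(1/2,1]$ the paper takes $\delta=2$ \emph{fixed}; it is only in the $\gamma=1/2$ case that $\delta=n^p$ is needed.
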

\begin{Rem}
We set $\alpha_*:=\sup\{\widehat{\alpha}>1;\int_1^\infty z^{\widehat \alpha}\nu(dz) < \infty\}$.
We point out that if $\gamma \in [\frac{1}{2}, \frac{\alpha_*}{2}]$ then the boundedness assumption on $\sigma$ can be removed. The rate of convergence can be retrieve by performing similar computations as in Theorem \ref{l2t} and we leave this to reader.
\end{Rem}
\begin{proof}
	Define ${Z}_t^{(n)}:=X_t-X_t^{(n)}$ and let $\varepsilon \in (0,1)$ and $\delta \in (1,\infty)$.
	By using \eqref{phi3} and It\^o's formula,
	\begin{align*}
	&|{Z}_t^{(n)}|
	\leq \varepsilon
	+\phi_{\delta,\varepsilon}({Z}_t^{(n)})
	=\varepsilon
	+{M}_t^{n,\delta,\varepsilon}
	+{I}_t^{n,\delta,\varepsilon}
	+{J}_t^{n,\delta,\varepsilon}
	+{K}_t^{n,\delta,\varepsilon},
	\end{align*}
	where we set
	\begin{align*}
	{M}_t^{n,\delta,\varepsilon}
	:=& \int_{0}^{t} \phi_{\delta,\varepsilon}'(Z_{s}^{(n)})\{\sigma(X_{s})-\sigma(X_{\eta_n(s)}^{(n)})\}dW_{s}\\
	&+\int_{0}^{t} \int_{0}^{\infty}
	\left\{
	\phi_{\delta,\varepsilon}(Z_{s-}^{(n)}+\{h(X_{s-})-h(X_{\eta_n(s)}^{(n)})\}z)-\phi_{\delta,\varepsilon}(Z_{s-}^{(n)})
	\right\}
	\widetilde{N}(ds,dz),\\
	{I}_t^{n,\delta,\varepsilon}
	:=&\int_{0}^{t} \phi_{\delta,\varepsilon}'(Z_{s}^{(n)})\{b(X_{s})-b(X_{\eta_n(s)}^{(n)})\}ds,
	\quad\\
	{J}_t^{n,\delta,\varepsilon}
	:=& \frac{1}{2} \int_{0}^{t} \phi_{\delta,\varepsilon}''(Z_{s}^{(n)}) |\sigma(X_{s})-\sigma(X_{\eta_n(s)}^{(n)})|^2ds,\\
	{K}_t^{n,\delta,\varepsilon}
	:=&
	\int_{0}^{t} \int_{0}^{\infty}
	\Big\{
	\phi_{\delta,\varepsilon}(Z_{s-}^{(n)}+\{h(X_{s-})-h(X_{\eta_n(s)}^{(n)})\}z)-\phi_{\delta,\varepsilon}(Z_{s-}^{(n)})\\
	&-\{h(X_{s-})-h(X_{\eta_n(s)}^{(n)})\}z \phi_{\delta,\varepsilon}'(Z_{s-}^{(n)})
	\Big\}
	\nu(dz)ds.
	\end{align*}

By localization arguments, we can take ${M}_t^{n,\delta,\varepsilon}$ to be a martingale and can be removed after taking the expectation. Therefore we only estimate the terms ${I}_{t}^{n,\delta,\varepsilon}$, ${J}_{t}^{n,\delta,\varepsilon}$ and ${K}_{t}^{n,\delta,\varepsilon}$.
The coefficient $b_1$ is Lipschitz continuous and $b_2$ is non-increasing, we have for $x, y \in \real$ with $x \neq y$,
%The coefficient $b_1$ is Lipschitz continuous and $b_2$ is $\rho$-H\"older continuos and non-increasing, then from \eqref{phi1} we have for $x, y \in \real$ with $x \neq y$,
	\begin{align*}
	\phi'_{\delta,\varepsilon}(x-y)(b(x)-b(y))	
	=\frac{\phi'_{\delta,\varepsilon}(x-y)}{x-y} (x-y)(b(x)-b(y))
	 \leq K_1\frac{|\phi'_{\delta,\varepsilon}(x-y)|}{|x-y|} |x-y|^2
	 \leq K|x-y|,
	\end{align*}
	where in the first inequality, we used \eqref{phi1} and the fact that $(x-y)(b_2(x)-b_2(y)) \leq 0$ and in the last inequality, we used \eqref{phi2} and Lipschitz continuity of $b_1$.
	From the above we have	
	\begin{align}\label{EP_6}
	I_t^{n,\delta,\varepsilon}
	&\leq \int_{0}^{t} \phi_{\delta,\varepsilon}'(Z_{s}^{(n)}) (b(X_{s})-b(X_{s}^{(n)}))ds
	+ \int_{0}^{t} \phi_{\delta,\varepsilon}'(Z_{s}^{(n)}) (b(X_{s}^{(n)})-b(X_{\eta_n(s)}^{(n)}))ds \notag\\
	&\leq K \int_{0}^{t} |Z_s^{(n)}| ds
	+K\int_{0}^{t} |X_{s}^{(n)}-X_{\eta_n(s)}^{(n)}|+|X_{s}^{(n)}-X_{\eta_n(s)}^{(n)}|^{\rho}ds.
	\end{align}
	Using the fact that $\sigma$ is bounded and \eqref{phi4}, we have
	\begin{align}
	J_t^{n,\delta,\varepsilon}
	&\leq \int_{0}^{t} \phi_{\delta,\varepsilon}''(Z_{s}^{(n)}) |\sigma(X_{s})-\sigma(X_{s}^{(n)})|^2ds
	+(2\|\sigma\|_{\infty})^{2-1/\gamma} \int_{0}^{t} \phi_{\delta,\varepsilon}''(Z_{s}^{(n)}) |\sigma(X_{s}^{(n)})-\sigma(X_{\eta_n(s)}^{(n)})|^{1/\gamma}ds \notag\\
	&\leq 2K^2\int_{0}^{t} \frac{\1_{[\varepsilon/\delta,\varepsilon]}(|Z_s^{(n)}|) |Z_s^{(n)}|^{2\gamma}}{|Z_s^{(n)}|\log \delta}ds
	+ 2 K^{1/\gamma} (2\|\sigma\|_{\infty})^{2-1/\gamma} \int_{0}^{t} \frac{\1_{[\varepsilon/\delta,\varepsilon]}(|Z_s^{(n)}|) |X_{s}^{(n)}-X_{\eta_n(s)}^{(n)}|}{|Z_s^{(n)}|\log \delta}ds \notag\\
	&\leq \frac{2TK^2\varepsilon^{2\gamma-1}}{\log \delta}
	+ \frac{2K^{1/\gamma}(2\|\sigma\|_{\infty})^{2-1/\gamma} \delta}{\varepsilon \log \delta}  \int_{0}^{t} |X_{s}^{(n)}-X_{\eta_n(s)}^{(n)}|ds.\label{EP_7}
	\end{align}	
	Finally, to estimate $K_t^{n,\delta,\varepsilon}$, we write it into two terms
	\begin{align*}
	K_t^{n,\delta,\varepsilon}
	=K_t^{n,\delta,\varepsilon,1}+K_t^{n,\delta,\varepsilon,2},
	\end{align*}
	where $K_t^{n,\delta,\varepsilon,1}$ and $K_t^{n,\delta,\varepsilon,2}$ are given by
	\begin{align*}
	K_t^{n,\delta,\varepsilon,1}
	&:=
	\int_{0}^{t} \int_{0}^{\infty}
	\Big\{
	\phi_{\delta,\varepsilon}(Z_{s}^{(n)}+\{h(X_{s})-h(X_{s}^{(n)})\}z)-\phi_{\delta,\varepsilon}(Z_{s}^{(n)})
	-\{h(X_{s})-h(X_{s}^{(n)})\}z \phi_{\delta,\varepsilon}'(Z_{s}^{(n)})
	\Big\}
	\nu(dz)ds\\
	K_t^{n,\delta,\varepsilon,2}
	&:=
	\int_{0}^{t} \int_{0}^{\infty}
	\Big\{
	\phi_{\delta,\varepsilon}(Z_{s}^{(n)}+\{h(X_{s})-h(X_{\eta_n(s)}^{(n)})\}z)-\phi_{\delta,\varepsilon}(Z_{s}^{(n)}+\{h(X_{s})-h(X_{s}^{(n)})\}z)\\
	& \quad -\{h(X_{s}^{(n)})-h(X_{\eta_n(s)}^{(n)})\}z \phi_{\delta,\varepsilon}'(Z_{s}^{(n)})
	\Big\}
	\nu(dz)ds.
	\end{align*}

%	We first consider $K_t^{n,\delta,\varepsilon,1}$.
	We observe that for each $s \in [0,t]$, if $Z_{s}^{(n)}=0$ then $h(X_{s})-h(X_{s}^{(n)})=0$. Therefore we can apply Lemma \ref{key_lem_0} with $y=Z_{s}^{(n)}$ and $x=h(X_{s})-h(X_{s}^{(n)})$ since $h$ is non-decreasing.	That is for any $u>0$, 
	\begin{align}
	&\int_{0}^{\infty}
	\left\{
	\phi_{\delta,\varepsilon}(Z_{s}^{(n)}+\{h(X_{s})-h(X_{s}^{(n)})\}z)-\phi_{\delta,\varepsilon}(Z_{s}^{(n)})
	-\{h(X_{s})-h(X_{s}^{(n)})\}z \phi_{\delta,\varepsilon}(Z_{s}^{(n)})
	\right\}
	\nu(dz) \notag\\
	&\leq
	\frac{2|h(X_{s})-h(X_{s}^{(n)})|^2 \1_{(0,\varepsilon]}(|Z_{s}^{(n)}|)}{|Z_{s}^{(n)}| \log \delta} \int_{0}^{u} z^2 \nu(dz)
	+ 2|h(X_{s})-h(X_{s}^{(n)})| \1_{(0,\varepsilon]}(|Z_{s}^{(n)}|) \int_{u}^{\infty} z \nu(dz)\notag\\
	& \leq \frac{2K^{2} |Z_{s}^{(n)}|^{2 \beta} \1_{(0,\varepsilon]}(|Z_{s}^{(n)}|)}{|Z_{s}^{(n)}| \log \delta} \int_{0}^{u} z^2 \nu(dz)
	+ 2K |Z_{s}^{(n)}|^{\beta} \1_{(0,\varepsilon]}(|Z_{s}^{(n)}|) \int_{u}^{\infty} z \nu(dz) \notag \\
	& \leq \frac{2K^{2}}{\log \delta} \varepsilon^{2 \beta-1} \int_{0}^{u} z^2 \nu(dz)
	+ 2K \varepsilon^{\beta} \int_{u}^{\infty} z \nu(dz),\label{EP_2}
	\end{align}
	where in the second last inequality, we used the fact that $h$ is a $\beta$-H\"older continuous function with $\beta \in (1-1/{\alpha_{\nu}},1)$.

	We recall that $\alpha_{\nu}=\inf\{\widehat{\alpha}>1; \lim_{x \to 0+} x^{\widehat{\alpha}-1} \int_{x}^{\infty}z \nu(dz)=0\}$.
	From Lemma 2.1 in \cite{LiMy}, we know that $\alpha_{\nu} \in [1,2]$ and for any $\widehat{\alpha}>\alpha_{\nu}$, $\lim_{x \to 0+} x^{\widehat{\alpha}-2} \int_{0}^{x} z^2 \nu(dz)=0$.
	%From (i) of Remark \ref{remark1.2} we know that $\alpha_{\nu} \in [1,2]$ and for any $\widehat{\alpha}>\alpha_{\nu}$, $\lim_{x \to 0+} x^{\widehat{\alpha}-2} \int_{0}^{x} z^2 \nu(dz)=0$.
	Also by the definition of $\alpha_{\nu}$, $\lim_{x \to 0+} x^{\widehat{\alpha}-1} \int_{x}^{\infty}z \nu(dz)=0$.
	Let $u=\varepsilon^{q}$ for some $q >0$, which we will choose later.
	Since $\beta \in (1-1/\alpha_{\nu},1)$, we can take $\widehat{\alpha}$ such that $\alpha_{\nu}<\widehat{\alpha}<\frac{1}{1-\beta}$.
	Then for sufficiently small $\varepsilon$, equation \eqref{EP_2} can be further bounded as follows
	\begin{align*}
	&\frac{2K^{2}}{\log \delta} \varepsilon^{2 \beta-1} \int_{0}^{\varepsilon^{q}} z^2 \nu(dz)
	+ 2K \varepsilon^{\beta} \int_{\varepsilon^{q}}^{\infty} z \nu(dz) \\
	& = \frac{K^{2}}{\log \delta} \varepsilon^{2 \beta-1-q(\widehat{\alpha}-2)} \varepsilon^{q(\widehat{\alpha}-2)}\int_{0}^{\varepsilon^{q}} z^2 \nu(dz)
	+ 2K \varepsilon^{\beta-q(\widehat{\alpha}-1)} \varepsilon^{q(\widehat{\alpha}-1)}\int_{\varepsilon^{q}}^{\infty} z \nu(dz) \\%\label{EP_3} \\
	&\leq \frac{2K^{2}}{\log \delta} \varepsilon^{2 \beta-1-q(\widehat{\alpha}-2)} 
	+ 2K \varepsilon^{\beta-q(\widehat{\alpha}-1)}
	=2\left(\frac{K^{2}}{\log \delta}+ K\right)\varepsilon^{1-\widehat{\alpha}(1-\beta)} \notag,
	\end{align*}
	where in the last equality, we have chosen $q>0$ such $2 \beta-1-q(\widehat{\alpha}-2)=\beta-q(\widehat{\alpha}-1)$, that is, $q=1-\beta$. From the above computation we have
	\begin{align}\label{EP_5}
	K_t^{n,\delta,\varepsilon,1}
	\leq 2T \left\{ \frac{K^{2}}{\log \delta}+ K\right\}\varepsilon^{1-\widehat{\alpha}(1-\beta)}.
	\end{align}		
	By applying \eqref{key_lem_122} in Lemma \ref{key_lem12} with $u=1,
		y=Z_{s}^{(n)},
		x=h(X_{s})-h(X_{\eta_n(s)}^{(n)})$,	$x'=h(X_{s})-h(X_{s}^{(n)})$
	and using the fact that $h$ is bounded, $K_t^{n,\delta,\varepsilon,2}$ can be bounded above by 
	\begin{align}\label{EP_8}
	&K_t^{n,\delta,\varepsilon,2}
	\leq |K_t^{n,\delta,\varepsilon,2}| \notag\\
	&\leq 2 \int_{0}^{1} z^2 \nu(dz) \int_{0}^{t}
	 \frac{\delta}{\varepsilon \log \delta} 
		\left(
			|h(X_{s}^{(n)})-h(X_{\eta_n(s)}^{(n)})|^2
			+|h(X_{s})-h(X_{s}^{(n)})| |h(X_{s}^{(n)})-h(X_{\eta_n(s)}^{(n)})|
		\right)
		\,ds \notag\\
	&\quad + 2 \int_{1}^{\infty} z \nu(dz) \int_{0}^{t} |h(X_{s}^{(n)})-h(X_{\eta_n(s)}^{(n)})| ds \nonumber \\
	&\leq 2
	\left\{
		\frac{4\|h\|_{\infty} \delta}{\varepsilon \log \delta}
		\int_{0}^{1} z^2 \nu(dz) \,ds
		+ \int_{1}^{\infty} z \nu(dz)
	\right\}
	\int_{0}^{t}
	|h(X_{s}^{(n)})-h(X_{\eta_n(s)}^{(n)})| ds \nonumber \\
	&\leq 2K
	\left\{
		\left( 4\|h\|_{\infty} \int_{0}^{1} z^2 \nu(dz)\right) \vee \int_{1}^{\infty} z \nu(dz)
	\right\}
	\left(\frac{\delta}{\varepsilon \log \delta} + 1\right)
	\int_{0}^{t} |X_{s}^{(n)}- X_{\eta_n(s)}^{(n)}|^{\beta} ds.
	\end{align}			

%	By using Taylor's expansion, the integrand of $K_t^{\delta,\varepsilon,2}$ equals to the following	
%	\begin{align}\label{EP_8}
%	|h(X_{s}^{(n)})-h(X_{\eta_n(s-)}^{(n)})|^2 z^2
%	\int_{0}^{1} 
%	\theta \phi_{\delta,\varepsilon}''(Z_s^{(n)}+\theta \{h(X_{s}^{(n)})-h(X_{\eta_n(s-)}^{(n)})\} )
%	d \theta.
%	\end{align}
%	Since $h$ is $\beta$-H\"older continuous, from \eqref{phi4}, \eqref{EP_8} is bounded by
%	\begin{align}\label{EP_9}
%	&\frac{2 K^2 |X_{s}^{(n)}-X_{\eta_n(s-)}^{(n)}|^{2\beta} z^2}{\log \delta}
%	\int_{0}^{1}
%	\frac{\theta \1_{[\varepsilon/\delta,\varepsilon]}(|Z_s^{(n)}+\theta \{h(X_{s}^{(n)})-h(X_{\eta_n(s-)}^{(n)})\}|) }{|Z_s^{(n)}+\theta \{h(X_{s}^{(n)})-h(X_{\eta_n(s-)}^{(n)})\}|}
%	d \theta \notag\\
%	&\leq 
%	\frac{K^2 |X_{s}^{(n)}-X_{\eta_n(s-)}^{(n)}|^{2\beta} \delta z^2}{\varepsilon \log \delta}.
%	\end{align}
%	Hence we obtain
%	\begin{align}\label{EP_10}
%	K_t^{\delta,\varepsilon,2}
%	&\leq \frac{K^2 \delta }{\varepsilon \log \delta} \int_{0}^{t} |X_{s}^{(n)}-X_{\eta_n(s-)}^{(n)}|^{2\beta} \int_{0}^{\infty} z^2\nu(dz) ds \notag\\
%	&=\frac{K_{\nu} K^2 \delta }{\varepsilon \log \delta} \int_{0}^{t} |X_{s}^{(n)}-X_{\eta_n(s-)}^{(n)}|^{2\beta} ds,
%	\end{align}
%	where $K_{\nu}:=\int_{0}^{\infty} z^2\nu(dz)$.
	
\noindent 	By taking the expectation in \eqref{EP_6}, \eqref{EP_7}, \eqref{EP_5} and \eqref{EP_8}, we obtain for any $t \in [0,T]$,
	\begin{align*}%\label{EP_11}
	\e[|Z_t^{(n)}|]
	& \leq \varepsilon
	+\e[{I}_t^{n,\delta,\varepsilon}]
	+\e[{J}_t^{n,\delta,\varepsilon}]
	+\e[K_t^{n,\delta,\varepsilon}] \notag\\
	&\leq \varepsilon 
	+K \int_{0}^{t} \e[|Z_{s-}^{(n)}|] ds
	+\frac{2TK^2\varepsilon^{2\gamma-1}}{\log \delta}
	+2T\left\{ \frac{K^{2}}{\log \delta}+ K\right\}\varepsilon^{1-\widehat{\alpha}(1-\beta)} \notag\\
	&\quad +K\int_{0}^{t} \e[|X_{s-}^{(n)}-X_{\eta_n(s)}^{(n)}|]+\e[|X_{s-}^{(n)}-X_{\eta_n(s)}^{(n)})|^{\rho}]ds\\
	&\quad +\frac{2K^{1/\gamma}(2\|\sigma\|_{\infty})^{2-1/\gamma}  \delta}{\varepsilon \log \delta}  \int_{0}^{t} \e[|X_{s-}^{(n)}-X_{\eta_n(s)}^{(n)})|] ds \notag \\
	&\quad+2K
	\left\{
	\left( 4\|h\|_{\infty} \int_{0}^{1} z^2 \nu(dz)\right) \vee \int_{1}^{\infty} z \nu(dz)
	\right\} \left( \frac{\delta}{\varepsilon \log \delta} + 1\right) \int_{0}^{t}
		\e[|X_{s}^{(n)}-X_{\eta_n(s)}^{(n)}|^{\beta}]ds.
	\end{align*}
	Using (ii) of Lemma \ref{EM_esti_1}, we have
	\begin{align*}%\label{EP_12}
	\e[|Z_t^{(n)}|]
	&\leq \varepsilon 
	+K \int_{0}^{t} \e[|Z_s^{(n)}|] ds
	+\frac{2TK^2\varepsilon^{2\gamma-1}}{\log \delta}
	+2T\left\{ \frac{K^{2}}{\log \delta}
	+K\right\}\varepsilon^{1-\widehat{\alpha}(1-\beta)} \notag\\
	&\quad+KT \left\{ \frac{C_2}{n^{1/2}}+\frac{C_2^{\rho} }{n^{\rho/2}} \right\}
	+2K^{1/\gamma}T (2\|\sigma\|_{\infty})^{2-1/\gamma} \frac{\delta}{\varepsilon \log \delta} \frac{C_2}{n^{1/2}} \notag\\
	&\quad+2KT
	\left\{
		\left( 4\|h\|_{\infty} \int_{0}^{1} z^2 \nu(dz)\right) \vee \int_{1}^{\infty} z \nu(dz)
	\right\} \left( \frac{\delta}{\varepsilon \log \delta} + 1\right) \frac{C_2^{\beta}}{n^{\beta/2}}.
	\end{align*}
	By using Gronwall's inequality, we have
	\begin{align*}%\label{EP_13}
 e^{-KT} \e[|Z_t^{(n)}|] 
 		&\leq
		\varepsilon
		+\frac{2TK^2\varepsilon^{2\gamma-1}}{\log \delta}
		+2T\left\{ \frac{K^{2}}{\log \delta}
		+K\right\}\varepsilon^{1-\widehat{\alpha}(1-\beta)} \notag\\
		&\quad+KT \left\{ \frac{C_2}{n^{1/2}}+\frac{C_2^{\rho} }{n^{\rho/2}} \right\}
		+2K^{1/\gamma}T (2\|\sigma\|_{\infty})^{2-1/\gamma}  \frac{\delta}{\varepsilon \log \delta} \frac{C_2}{n^{1/2}} \notag\\
		&\quad+2KT
		\left\{
		\left( 4\|h\|_{\infty} \int_{0}^{1} z^2 \nu(dz)\right) \vee \int_{1}^{\infty} z \nu(dz)
		\right\} \left( \frac{\delta}{\varepsilon \log \delta} + 1\right) \frac{C_2^{\beta}}{n^{\beta/2}}.
	\end{align*}
		
	To optimize the above bound, if $\gamma \in (1/2,1]$, then we choose $\delta =2$ and obtain
	\begin{align*}%\label{EM_14}
	\e[|Z_t^{(n)}|]
	\leq M_2
	\left\{
	\varepsilon
	+\varepsilon^{2\gamma-1}
	+\varepsilon^{1-\widehat{\alpha}(1-\beta)}
	+\frac{1}{n^{\rho/2}} 
	+\frac{1}{\varepsilon n^{1/2}}
	+\left(\frac{1}{\varepsilon}+1\right)\frac{1}{n^{\beta/2}}
	\right\},
	\end{align*}
	where the constant $M_2$ given by
	\begin{align*}
	M_2
	:=e^{KT} \max\Bigg\{
	1,\frac{2TK^2}{\log 2},T\left\{ \frac{K^{2}}{\log 2}+ K\right\},
	2KT\{C_2+C_2^{\rho}\},
	\frac{4 K^{1/\gamma}T (2\|\sigma\|_{\infty})^{2-1/\gamma} }{\log 2},\\
	2KT
	\left\{
	\left( 2\|h\|_{\infty} \int_{0}^{1} z^2 \nu(dz)\right) \vee \int_{1}^{\infty} z \nu(dz)
	\right\} \frac{2C_2^{\beta}}{\log 2} 
	\Bigg\}.
	\end{align*}
	We let $\varepsilon=n^{-q}$, where the optimal $q>0$ is chosen later. There are two cases to consider. If $\alpha_{\nu}<\frac{2(1-\gamma)}{1-\beta}$, then we choose $\widehat{\alpha}=\frac{2(1-\gamma)}{1-\beta}$ and we have $2\gamma-1=1-\widehat{\alpha}(1-\beta)$.
	Hence by choosing $q$ such $q(2\gamma-1)=\beta/2-q$, that is $q=\frac{\beta}{4\gamma}$, we have
	\begin{align*}%\label{EM_15_2}
	\e[|Z_t^{(n)}|]
	\leq 
	6M_2\left\{
	n^{-\rho/2}
	+ n^{-\frac{\beta}{2}\left(1-\frac{1}{2\gamma}\right)}
	\right\}.
	\end{align*}
	If $\alpha_{\nu} \geq \frac{2(1-\gamma)}{1-\beta}$, then we choose $\widehat{\alpha}=\alpha_{\nu}+\varepsilon$ for any $\varepsilon \in (0,\frac{1}{1-\beta}-\alpha_{\nu})$ and then $2\gamma-1>1-(\alpha_{\nu}+\varepsilon)(1-\beta)$.
	Hence by choosing $q$ such that  $q(1-(\alpha_{\nu}+\varepsilon)(1-\beta))=\beta/2-q$, that is $q=\frac{\beta}{2}\frac{1}{2-(\alpha_{\nu}+\varepsilon)(1-\beta)}$, we have
	\begin{align*}
	\e[|Z_t^{(n)}|]
	\leq 
	6M_2\left\{
		n^{-\rho/2}
		+ n^{-\frac{\beta}{2}\left(1-\frac{1}{2-(\alpha_{\nu}+\varepsilon)(1-\beta) }\right)}
	\right\}.
	\end{align*}
	This concludes the proof for $\gamma \in (1/2,1]$.

	If $\gamma = 1/2$, then we choose $\varepsilon=n^{-q}$ and $\delta =n^{p}$ with $p,q>0$ and $p+q<\beta/2$, we have
	\begin{align*}
%		\label{EM_16}
		e^{-KT} \e[|Z_t^{(n)}|]
		\leq &
		\frac{1}{n^{q}}
		+\frac{2TK^2}{p \log n}
		+2T\left\{ \frac{K^{2}}{p \log n}
		+K\right\}\frac{1}{n^{q-q\widehat{\alpha}(1-\beta)}} \notag\\
		&\quad+KT \left\{ \frac{C_2}{n^{1/2}}+\frac{C_2^{\rho} }{n^{\rho/2}} \right\}
		+K^{2}T \frac{n^{p+q}}{p \log n} \frac{C_2}{n^{1/2}} \notag\\
		&\quad+2KT
		\left\{
		\left( 4\|h\|_{\infty} \int_{0}^{1} z^2 \nu(dz)\right) \vee \int_{1}^{\infty} z \nu(dz)
		\right\} \left( \frac{n^{p+q}}{p \log n} + 1\right) \frac{C_2^{\beta}}{n^{\beta/2}}.
	\end{align*}
	Hence we can conclude that 
	\begin{align*}
	\e[|Z_t^{(n)}|]
	\leq \frac{M_3}{\log n},
	\end{align*}
	where the constant $M_3$ is given by 
	\begin{align*}
	M_3=
	e^{KT} \max\Bigg\{
	1,
	&\frac{2TK^2}{p},T\left\{\frac{K^2}{p}+K\right\},
	2KT\left\{C_2+C_2^{\rho} \right\},
	p^{-1}K^{2}C_2,\\
	&2KT
	\left\{
		\left( 4\|h\|_{\infty} \int_{0}^{1} z^2 \nu(dz)\right) \vee \int_{1}^{\infty} z \nu(dz)
	\right\} \left( p^{-1} + 1\right) C_2^{\beta}
	\Bigg\}.
	\end{align*}
	This concludes the proof for $\gamma =1/2$.
	
We consider now the L\'evy measure $\nu(dz)$ defined by
\begin{align*}
\nu(dz)=\frac{\1_{(0,\infty)} (z) \mu(z)}{z^{1+\alpha}} dz,
\end{align*}
for some $\alpha \in (1,2)$ and bounded measurable function $\mu$.
Then since
\begin{align*}
	\int_{x}^{\infty} z \nu(dz)
	\leq \|\mu\|_{\infty} \int_{x}^{\infty} z^{-\alpha} dz
	= \frac{\|\mu\|_{\infty} x^{1-\alpha}}{\alpha-1},
\end{align*}
we have $\alpha_{\nu}=\alpha$.
To conclude the statement, it is suffices to estimate the upper bounded of $K^{n,\delta,\varepsilon,1}$.
From \eqref{EP_2}, with $u=\varepsilon^{q}$ and $q >0$, we have
\begin{align*}
	K_t^{n,\delta,\varepsilon,1}
	&\leq \frac{2K^{2}T}{\log \delta} \varepsilon^{2 \beta-1} \int_{0}^{\varepsilon^{q}} z^2 \nu(dz)
	+ 2KT \varepsilon^{\beta} \int_{\varepsilon^{q}}^{\infty} z \nu(dz)  \notag\\
	&\leq  \frac{2K^{2} \|\mu\|_{\infty} }{\log \delta} \varepsilon^{2 \beta-1} \int_{0}^{\varepsilon^{q}} z^{1-\alpha} dz
	+ 2K\|\mu\|_{\infty} \varepsilon^{\beta} \int_{\varepsilon^{q}}^{\infty} z^{-\alpha}dz \notag \\
	&= \frac{2K^{2} \|\mu\|_{\infty}}{(2-\alpha) \log \delta} \varepsilon^{2 \beta-1-q(\alpha-2)} 
	+ \frac{2K\|\mu\|_{\infty}}{\alpha-1} \varepsilon^{\beta-q(\alpha-1)}\notag	 \\
	&=\left(\frac{2K^{2}}{(2-\alpha)\log \delta}+ \frac{2K}{(\alpha-1)}\right) \|\mu\|_{\infty} \varepsilon^{1-\alpha(1-\beta)} %\label{EP9},
\end{align*}
where in the last equality, we have chosen $q=1-\beta$.
This upper bound concludes the proof.
\end{proof}

\subsection{The Square Integrable Case}\label{L2}
In this subsection we compute the strong rate of convergence in the case where $L$ is a square integrable.
In this case, the boundedness condition on the coefficients $\sigma$ and $h$ can be lifted.
Examples of square integrable L\'evy process which can be simulated include compensated Poisson process, spectrally positive tempered stable processes or spectrally positive truncated stable processes.

%\begin{Thm}\label{l2t}
%Suppose that Assumption \ref{Ass_1} holds and $\int_{1}^{\infty}z^2 \nu(dz)<\infty$.
% Then there exists $C>0$ depend on $x_0$, $K$, $T$, $\rho$, $\gamma$ and $\beta$ such that for any $\varepsilon \in (0,\frac{1}{1-\beta}-\alpha_{\nu})$,
%\begin{align*}
%	\sup_{t \leq T}\e[|X_{t}-X_{t}^{(n)}|]
%	\leq C
%	\left\{ \begin{array}{ll}
%	\displaystyle
%	\left(\frac{1}{n}\right)^{\rho/2}
%	+ \left(\frac{1}{n}\right)^{\frac{\beta}{2} \left(1-\frac{1}{2\gamma}\right)}
%	&\gamma \in (1/2,1],\,\alpha_{\nu}<\frac{2(1-\gamma)}{1-\beta},\\
%	\displaystyle
%	\left(\frac{1}{n}\right)^{\rho/2}
%	+ \left(\frac{1}{n}\right)^{\frac{\beta}{2} \left(1-\frac{1}{2-(\alpha_{\nu}+\varepsilon)(1-\beta)}\right)}
%	& \gamma \in (1/2,1],\, \alpha_{\nu}\geq \frac{2(1-\gamma)}{1-\beta},\\
%	\displaystyle
%	\frac{1}{\log n}
%	& \gamma=1/2.
%	\end{array}\right.
%\end{align*}
%Moreover, if the L\'evy measure $\nu(dz)$ is defined by \eqref{stable_meas} for some $\alpha \in (1,2)$ and bounded measurable function $\mu$, then the above $\varepsilon$ can be chosen as zero and $\alpha_{\nu}=\alpha$.
%\end{Thm}
%%\begin{proof}
%%	We remove the hitting error $n^{-1/4}$ and replace $\log(n)/n$ by $1/n$ for the case $\gamma \in (1/2,1]$.
%%\end{proof}
%%Before computing the hitting and discretization erorr, we give the following some useful results.

\begin{Lem}\label{lem_moment}
	Suppose that Assumption \ref{Ass_1} holds and $\int_{1}^{\infty}z^2 \nu(dz)<\infty$. 
\begin{itemize}
	\item[(i)] Then there exists a constant $C_3>0$ such that
	\begin{align}
	\e\big[\,\sup_{t \leq T} |X^{(n)}_t|^2\,\big]
	&\leq C_3, \label{lem_moment_eq1}
%	\e\big[\,\sup_{t \leq T} |X^{(n)}_t|^2 \,\big|\, \mathcal{G}\big]
%	&\leq C_3.\label{lem_moment_eq2}\\
%%	\e[\max_{0\leq k \leq n} |X^{(n)}_{\tau_{k}^{(n)}}|^2]
%%	&\leq C_3,\\ \label{lem_moment_eq3}\\
%	\e\big[ \,\big| \e[\max_{ k \leq n} |X^{(n)}_{\tau_{k}^{(n)}}|^2 | \,\mathcal{G}] \big|^2 \big]
%	&\leq C_3. \label{lem_moment_eq4}
	\end{align}
	\item[(i)]  Then there exists a constant $C_4>0$ such that and for any $t \in [0,T]$,
	\begin{align}\label{EP_esti_3}
	\e\big[|{X}_t^{(n)}-{X}^{(n)}_{\eta_{n}(t)}|^2\big]
	\leq \frac{C_4}{n}.
	\end{align}
\end{itemize}
\begin{proof}
The proof is similar to Lemma \ref{EM_esti_1}. It is sufficient to apply It\^o's isometry and linear growth condition on the coefficients.
\end{proof}
\end{Lem}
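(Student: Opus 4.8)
The plan is to run the argument of Lemma \ref{EM_esti_1} in $L^2$ rather than $L^1$: the extra hypothesis $\int_1^\infty z^2\nu(dz)<\infty$ combined with Assumption \ref{Ass_1}(ii) gives $\int_0^\infty z^2\nu(dz)<\infty$, so by Remark \ref{remark1.2}(ii) we have $\e[\sup_{s\le T}|L_s|^2]<\infty$, and, crucially, $\int_0^t\int_0^\infty h(X^{(n)}_{\eta_n(s)})z\,\widetilde N(ds,dz)$ is (locally) a genuine $L^2$-martingale whose predictable quadratic variation is $\big(\int_0^\infty z^2\nu(dz)\big)\int_0^t |h(X^{(n)}_{\eta_n(s)})|^2\,ds$. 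This is exactly what lets us drop the boundedness of $\sigma$ and $h$.

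For (i) I would first introduce a localizing sequence $(T_m)_{m\in\n}$ of stopping times, as in the proof of Lemma \ref{EM_esti_1}, using the left-continuity of $X^{(n)}_{-}$ and Theorem 7.30 of \cite{HWY}, so that when stopped at $T_m$ all three stochastic integrals are genuine square-integrable martingales and $\sup_{s\le T\wedge T_m}|X^{(n)}_s|$ is bounded. Then, applying $(a+b+c+d)^2\le 4(a^2+b^2+c^2+d^2)$, Cauchy--Schwarz on the drift term, Doob's $L^2$-inequality (or Burkholder--Davis--Gundy) together with It\^o's isometry on the Brownian and the compensated-Poisson integrals, and the linear growth bound $|b|+|\sigma|+|h|\le K(1+|\cdot|)$, one obtains
\begin{align*}
\e\big[\,\sup_{t\le T\wedge T_m}|X^{(n)}_t|^2\,\big]
\le C' + C'' \int_0^T \e\big[\,\sup_{u\le s}|X^{(n)}_{u\wedge T_m}|^2\,\big]\,ds,
\end{align*}
where $C',C''$ depend only on $x_0$, $K$, $T$ and $\int_0^\infty z^2\nu(dz)$, and where, as in \eqref{l2.1}, one uses $\sup_{u\le s\wedge T_m}|X^{(n)}_{\eta_n(u)}|\le \sup_{u\le s}|X^{(n)}_{u\wedge T_m}|$ (replacing the left-limit by the c\`adl\`ag value inside the Lebesgue integral). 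Gronwall's inequality then gives a bound independent of $m$, and monotone convergence as $m\to\infty$ yields \eqref{lem_moment_eq1}.

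For (ii), fix $t\in(t_j,t_{j+1}]$; since $\eta_n(s)=t_j$ for all $s\in(t_j,t]$,
\begin{align*}
X^{(n)}_t-X^{(n)}_{\eta_n(t)}
= b(X^{(n)}_{t_j})(t-t_j) + \sigma(X^{(n)}_{t_j})(W_t-W_{t_j}) + h(X^{(n)}_{t_j})(L_t-L_{t_j}).
\end{align*}
As $X^{(n)}_{t_j}$ is $\F_{t_j}$-measurable while $W_t-W_{t_j}$ and $L_t-L_{t_j}$ are independent of $\F_{t_j}$, I would condition on $\F_{t_j}$ and use $\e[(W_t-W_{t_j})^2]=t-t_j$ and $\e[(L_t-L_{t_j})^2]=(t-t_j)\int_0^\infty z^2\nu(dz)$; then $(a+b+c)^2\le 3(a^2+b^2+c^2)$, the linear growth bound and part (i) give
\begin{align*}
\e\big[|X^{(n)}_t-X^{(n)}_{\eta_n(t)}|^2\big]
\le 6K^2(1+C_3)\Big[(t-t_j)^2 + (t-t_j) + (t-t_j)\textstyle\int_0^\infty z^2\nu(dz)\Big]
\le \frac{C_4}{n},
\end{align*}
using $t-t_j\le T/n$.

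There is no genuine obstacle here: the only points requiring care are the localization needed to turn the stochastic integrals into true $L^2$-martingales before invoking Doob/BDG/It\^o isometry, and the minor c\`adl\`ag-versus-left-continuous bookkeeping in the Gronwall step, both handled exactly as in Lemma \ref{EM_esti_1}. The role of the hypothesis $\int_1^\infty z^2\nu(dz)<\infty$ is precisely to make $L$ an $L^2$-martingale, which is what allows the boundedness assumptions on $\sigma$ and $h$ to be removed.
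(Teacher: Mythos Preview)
Your proposal is correct and follows exactly the approach the paper indicates: it is the $L^2$ analogue of Lemma \ref{EM_esti_1}, using It\^o's isometry (made available by $\int_0^\infty z^2\,\nu(dz)<\infty$) and the linear growth of $b,\sigma,h$, with the same localization and c\`adl\`ag bookkeeping. The only cosmetic difference is that in $L^2$ the standard Gronwall inequality suffices directly after Doob/It\^o isometry, whereas in Lemma \ref{EM_esti_1}(i) the paper had to invoke Lemma 3.2(i) of \cite{GyRa} to handle the mixed $L^1$/$L^2$ structure.
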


\begin{Thm}\label{l2t}
	Suppose that Assumption \ref{Ass_1} holds  and $\int_{1}^{\infty}z^2 \nu(dz)<\infty$. Then there exists $C_{5}>0$ such that for any $\varepsilon \in (0,\frac{1}{1-\beta}-\alpha_{\nu})$,
	\begin{align*}
	\sup_{t \leq T}\e[|X_{t}-{X}_{t}^{(n)}|]
	\leq C_{5}
	\left\{ \begin{array}{ll}
	\displaystyle
	n^{-\rho/2}
	+ n^{-\frac{\beta}{2} \left(1-\frac{1}{2\gamma}\right)}
	&\gamma \in (1/2,1],\,\alpha_{\nu}<\frac{2(1-\gamma)}{1-\beta},\\
	\displaystyle
	n^{-\rho/2}
	+ n^{-\frac{\beta}{2} \left(1-\frac{1}{2-(\alpha_{\nu}+\varepsilon)(1-\beta)}\right)}
	&\gamma \in (1/2,1],\, \alpha_{\nu}\geq \frac{2(1-\gamma)}{1-\beta},\\
	\displaystyle
	(\log n)^{-1}
	&\gamma=1/2.
	\end{array}\right.
	\end{align*}
\end{Thm}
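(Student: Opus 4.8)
The plan is to follow the same Yamada--Watanabe route used in the proof of Theorem~\ref{main_1}, replacing every place where the boundedness of $\sigma$ and $h$ was invoked by an $L^2$-moment bound from Lemma~\ref{lem_moment}. Setting $Z_t^{(n)}:=X_t-X_t^{(n)}$, apply \eqref{phi3} and It\^o's formula to write $|Z_t^{(n)}|\le \varepsilon + M_t^{n,\delta,\varepsilon}+I_t^{n,\delta,\varepsilon}+J_t^{n,\delta,\varepsilon}+K_t^{n,\delta,\varepsilon}$ exactly as before, where now the jump terms in $M_t$ and $K_t$ are integrated against $\widetilde N$ and $\nu$ respectively. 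Since $\int_1^\infty z^2\,\nu(dz)<\infty$, Remark~\ref{key_lem13} lets me take $u=\infty$ in Lemma~\ref{key_lem12}, so the ``tail'' integrals $\int_u^\infty z\,\nu(dz)$ simply disappear and I only need the $\int_0^\infty z^2\,\nu(dz)$-type bounds. The drift term $I_t^{n,\delta,\varepsilon}$ is handled verbatim as in \eqref{EP_6} using $b=b_1+b_2$ with $b_1$ Lipschitz and $b_2$ non-increasing, giving $I_t^{n,\delta,\varepsilon}\le K\int_0^t|Z_s^{(n)}|\,ds + K\int_0^t\{|X_s^{(n)}-X_{\eta_n(s)}^{(n)}|+|X_s^{(n)}-X_{\eta_n(s)}^{(n)}|^\rho\}\,ds$.

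For $J_t^{n,\delta,\varepsilon}$, the only change is that I can no longer pull out $\|\sigma\|_\infty$; instead I split $|\sigma(X_s)-\sigma(X_{\eta_n(s)}^{(n)})|^2 \le 2|\sigma(X_s)-\sigma(X_s^{(n)})|^2 + 2|\sigma(X_s^{(n)})-\sigma(X_{\eta_n(s)}^{(n)})|^2$, bound the first piece by $2K^2|Z_s^{(n)}|^{2\gamma}$ and use \eqref{phi4} to absorb it into a $\varepsilon^{2\gamma-1}/\log\delta$ term, and bound the second piece by $2K^2|X_s^{(n)}-X_{\eta_n(s)}^{(n)}|^{2\gamma}$ times $\phi''_{\delta,\varepsilon}\le 2\delta/(\varepsilon\log\delta)$; after taking expectations, Lemma~\ref{lem_moment}(ii) controls $\e[|X_s^{(n)}-X_{\eta_n(s)}^{(n)}|^{2\gamma}]\le (C_4/n)^\gamma$ by Jensen. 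For the jump term $K_t^{n,\delta,\varepsilon}=K_t^{n,\delta,\varepsilon,1}+K_t^{n,\delta,\varepsilon,2}$: the first piece is treated exactly as in \eqref{EP_2}--\eqref{EP_5} — since $Z_s^{(n)}=0$ forces $h(X_s)=h(X_s^{(n)})$, Lemma~\ref{key_lem_0} applies with $y=Z_s^{(n)}$, $x=h(X_s)-h(X_s^{(n)})$, and the $\beta$-H\"older bound with $\beta\in(1-1/\alpha_\nu,1)$ plus the choice $\widehat\alpha\in(\alpha_\nu,\tfrac1{1-\beta})$ yields $K_t^{n,\delta,\varepsilon,1}\le 2T\{K^2/\log\delta+K\}\varepsilon^{1-\widehat\alpha(1-\beta)}$ (here one takes $u=\varepsilon^{1-\beta}$ inside, and the $\int_u^\infty z\,\nu(dz)$ part is finite since $\int_1^\infty z\,\nu(dz)<\infty$). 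The second piece $K_t^{n,\delta,\varepsilon,2}$ is where I use Lemma~\ref{key_lem12} with $u=\infty$, $x=h(X_s)-h(X_{\eta_n(s)}^{(n)})$, $x'=h(X_s)-h(X_s^{(n)})$, so that $x-x'=h(X_s^{(n)})-h(X_{\eta_n(s)}^{(n)})$; this gives a bound of the form $\tfrac{2\delta}{\varepsilon\log\delta}\int_0^\infty z^2\,\nu(dz)\int_0^t(|x-x'|^2+|x'||x-x'|)\,ds$, and now instead of bounding $|x'|$ by $2\|h\|_\infty$ I use Cauchy--Schwarz in the time-space integral together with $|x'|\le K|Z_s^{(n)}|^\beta$ and the $L^2$-moment bounds of Lemma~\ref{lem_moment}, together with $|x-x'|\le K|X_s^{(n)}-X_{\eta_n(s)}^{(n)}|^\beta$.

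After taking expectations of all four terms and using Lemma~\ref{lem_moment}(ii) to replace every $\e[|X_s^{(n)}-X_{\eta_n(s)}^{(n)}|^{p}]$ ($p\in\{\beta,\rho,1,2\gamma,2\beta\}$) by $(C_4/n)^{p/2}$ via Jensen, I arrive at an inequality of the form $\e[|Z_t^{(n)}|]\le \varepsilon + K\int_0^t\e[|Z_s^{(n)}|]\,ds + (\text{terms in }\varepsilon,\delta,n)$, apply Gronwall, and then optimize over $\varepsilon=n^{-q}$ (and over $\delta=2$ when $\gamma>1/2$, or $\delta=n^p$ with $p+q<\beta/2$ when $\gamma=1/2$) with exactly the same bookkeeping as in Theorem~\ref{main_1}: when $\alpha_\nu<\tfrac{2(1-\gamma)}{1-\beta}$ pick $\widehat\alpha=\tfrac{2(1-\gamma)}{1-\beta}$ so that $2\gamma-1=1-\widehat\alpha(1-\beta)$ and $q=\beta/(4\gamma)$; when $\alpha_\nu\ge\tfrac{2(1-\gamma)}{1-\beta}$ pick $\widehat\alpha=\alpha_\nu+\varepsilon$ and $q=\tfrac{\beta}{2}\cdot\tfrac1{2-(\alpha_\nu+\varepsilon)(1-\beta)}$. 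The main obstacle — and the only genuinely new estimate relative to Theorem~\ref{main_1} — is controlling $K_t^{n,\delta,\varepsilon,2}$ (and the cross term in $J_t$) without boundedness of $h$ (resp.\ $\sigma$): one must verify that the cross term $\e\big[\int_0^t |Z_s^{(n)}|^\beta\,|X_s^{(n)}-X_{\eta_n(s)}^{(n)}|^\beta\,ds\big]$, after Cauchy--Schwarz, produces a factor $\big(\e[\sup_{s\le T}|Z_s^{(n)}|^{2\beta}]\big)^{1/2}\cdot(C_4/n)^{\beta/2}$ that is $O(n^{-\beta/2})$ with a constant independent of $n$; this is where finiteness of $\e[\sup_{t\le T}|X_t^{(n)}|^2]$ and the analogous bound for $X$ (from Theorem~2.2 of \cite{LiMy}) are essential, since they bound $\e[\sup_s|Z_s^{(n)}|^{2\beta}]$ uniformly in $n$. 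Everything else is a routine transcription of the proof of Theorem~\ref{main_1}, so I will only spell out the $K_t^{n,\delta,\varepsilon,2}$ and $J_t^{n,\delta,\varepsilon}$ estimates in detail and refer to Theorem~\ref{main_1} for the remaining steps and the final optimization.
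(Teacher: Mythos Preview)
Your proposal is correct and follows essentially the same route as the paper's proof: both recognize that boundedness of $\sigma$ and $h$ was used only in the $J_t^{n,\delta,\varepsilon}$ and $K_t^{n,\delta,\varepsilon,2}$ estimates, redo these using the $L^2$-moment bounds of Lemma~\ref{lem_moment} (with $u=\infty$ in Lemma~\ref{key_lem12} thanks to $\int_1^\infty z^2\,\nu(dz)<\infty$), and then repeat the Gronwall/optimization step verbatim from Theorem~\ref{main_1}. The only cosmetic difference is in the cross term of $K_t^{n,\delta,\varepsilon,2}$: you bound $|h(X_s)-h(X_s^{(n)})|\le K|Z_s^{(n)}|^\beta$ via H\"older continuity, whereas the paper bounds it via linear growth $|h(X_s)-h(X_s^{(n)})|\le K(2+|X_s|+|X_s^{(n)}|)$; both feed into Cauchy--Schwarz and require exactly the same second-moment bounds on $X$ and $X^{(n)}$, yielding the same $n^{-\beta/2}$ contribution.
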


	\begin{proof}
	%The proof is the same as before, we need only to take $u = \infty$ in Lemma \ref{key_lem_0} and Lemma \ref{key_lem12} and replace $\beta/2$ by $\beta$ in \eqref{EM_14}.
The proof is similar to that of Theorem \ref{main_1}. We recall that ${Z}_t^{(n)}:=X_t-X_t^{(n)}$ and in the proof of Theorem \ref{main_1}, the boundedness of $\sigma$ and $h$ were only used in the estimation of ${J}_t^{n,\delta,\varepsilon}$ and ${K}_t^{n,\delta,\varepsilon,2}$. Therefore, we present here only the estimates of ${J}_t^{n,\delta,\varepsilon}$ and ${K}_t^{n,\delta,\varepsilon,2}$. 

	Using the fact that $\sigma$ is $\gamma$-H\"older continuous, we have
	\begin{align}\label{EP_L2_0}
	J_t^{n,\delta,\varepsilon}
	&\leq \int_{0}^{t} \phi_{\delta,\varepsilon}''(Z_{s}^{(n)}) |\sigma(X_{s})-\sigma(X_{s}^{(n)})|^2ds
	+\int_{0}^{t} \phi_{\delta,\varepsilon}''(Z_{s}^{(n)}) |\sigma(X_{s}^{(n)})-\sigma(X_{\eta_n(s)}^{(n)})|^{2}ds \notag\\
	&\leq 2K^2\int_{0}^{t} \frac{\1_{[\varepsilon/\delta,\varepsilon]}(|Z_s^{(n)}|) |Z_s^{(n)}|^{2\gamma}}{|Z_s^{(n)}|\log \delta}ds
	+ 2K^2 \int_{0}^{t} \frac{\1_{[\varepsilon/\delta,\varepsilon]}(|Z_s^{(n)}|) |X_{s}^{(n)}-X_{\eta_n(s)}^{(n)})|^{2\gamma}}{|Z_s^{(n)}|\log \delta}ds \notag\\
	&\leq \frac{2TK^2\varepsilon^{2\gamma-1}}{\log \delta}
	+ \frac{2K^2 \delta}{\varepsilon \log \delta}  \int_{0}^{t} |X_{s}^{(n)}-X_{\eta_n(s)}^{(n)})|^{2\gamma}ds.
	\end{align}	
	Next, we estimate the $K_t^{n,\delta,\varepsilon,2}$ term.	By applying \eqref{key_lem_122} in Lemma \ref{key_lem12} with
	\begin{align*}
	u=+\infty,~
	y=Z_{s}^{(n)},~
	x=h(X_{s})-h(X_{\eta_n(s)}^{(n)})\quad \text{and}\quad
	x'=h(X_{s})-h(X_{s}^{(n)}),
	\end{align*}
	the term $K_t^{n,\delta,\varepsilon,2}$ can be bounded above by (see Remark \ref{key_lem13}),
	\begin{align*}
	&{K}_t^{n,\delta,\varepsilon,2}
	\leq |K_t^{n,\delta,\varepsilon,2}| \notag\\
	&\leq 2 \int_{0}^{t}
	\frac{\delta}{\varepsilon \log \delta} 
	\left( |h(X_{s}^{(n)})-h(X_{\eta_n(s)}^{(n)})|^2+|h( X_{s})-h(X_{s}^{(n)})| |h(X_{s}^{(n)})-h(X_{\eta_n(s)}^{(n)})| \right)
	\int_{0}^{\infty} z^2 \nu(dz) \,ds.
	\end{align*}
	Hence by taking the expectation of both hand sides and using the H\"older inequality, we have
	\begin{align*}
%	\label{EP_L2_1}
	\e[K_t^{n,\delta,\varepsilon,2}] \notag
	&\leq \frac{2 \delta}{\varepsilon \log \delta} \int_{0}^{\infty} z^2 \nu(dz)
	\int_{0}^{t}
	\e[|h(X_{s}^{(n)})-h( X_{\eta_n(s)}^{(n)})|^2]ds \notag\\
	&\quad
	+\frac{2 \delta}{\varepsilon \log \delta} \int_{0}^{\infty} z^2 \nu(dz)
	\int_{0}^{t}\e[|h(X_{s})-h( X_{s}^{(n)})|^2]^{1/2} \e[|h(X_{s}^{(n)})-h(X_{\eta_n(s)}^{(n)})|^2]^{1/2}
	 \,ds.
	\end{align*}
	Next, by using the fact that $h$ is of linear growth and $\beta$-H\"older continuous,
	\begin{align}\label{EP_L2_2}
	\e[K_t^{n,\delta,\varepsilon,2}] & \leq \frac{2 K^2 \delta}{\varepsilon \log \delta} \int_{0}^{\infty} z^2 \nu(dz)
	\int_{0}^{t}
	\e[|X_{s}^{(n)}- X_{\eta_n(s)}^{(n)}|^{2\beta}]ds \notag\\
	&\quad
	+\frac{2 \cdot 3^{1/2} K^{3/2} \delta }{\varepsilon \log \delta} \int_{0}^{\infty} z^2 \nu(dz)
	\int_{0}^{t}\e[(4+|X_{s}|^2+|X_{s}^{(n)} |^2)]^{1/2} \e[|X_{s}^{(n)}-X_{\eta_n(s)}^{(n)}|^{2\beta}]^{1/2}
	\,ds \notag\\
	&\leq 2 K^2 T C_4^{\beta} \int_{0}^{\infty} z^2 \nu(dz)
	\frac{\delta}{\varepsilon \log \delta}
	\left(\frac{1}{n}\right)^{\beta}
	\notag\\
	&\quad
	+2 \cdot 3^{1/2} K^{3/2}T C_4^{\beta/2}\int_{0}^{\infty} z^2 \nu(dz)
	\left\{ 4+\sup_{t \leq T} \e[|X_{s}|^2]+C_3\right\}^{1/2}
	\frac{\delta}{\varepsilon \log \delta}
	\left(\frac{1}{n}\right)^{\beta/2}.
	\end{align}
	Take the expectation in \eqref{EP_6}, \eqref{EP_L2_0}, \eqref{EP_5} and \eqref{EP_L2_2}, we obtain from \eqref{EP_esti_3} and the Gronwall's inequality, for any $t \in [0,T]$,
	\begin{align*} %\label{EP_L2_3}
	e^{-KT}\e[|Z_t^{(n)}|] 
	&\leq \varepsilon 
	+\frac{2TK^2\varepsilon^{2\gamma-1}}{\log \delta}
	+2T\left\{ \frac{K^{2}}{\log \delta}+ K\right\}\varepsilon^{1-\widehat{\alpha}(1-\beta)}\\
	& \quad +KT \left\{
		\left( \frac{C_4}{n}\right)^{1/2}
		+\left( \frac{C_4}{n}\right)^{\rho/2}
		\right\}
	 \notag \\
	&\quad
	+K^2T C_4^{\gamma} \frac{ \delta}{\varepsilon \log \delta}
	\left(\frac{1}{n}\right)^{\gamma}
	+2 K^2 T C_4^{\beta} \int_{0}^{\infty} z^2 \nu(dz)
	\frac{\delta}{\varepsilon \log \delta}
	\left(\frac{1}{n}\right)^{\beta}
	\notag\\
	&\quad
	+2 \cdot 3^{1/2} K^{3/2}T C_4^{\beta/2}\int_{0}^{\infty} z^2 \nu(dz)
	\left\{ 4+\sup_{ t \leq T} \e[|X_{s}|^2]+C_3\right\}^{1/2}
	\frac{\delta}{\varepsilon \log \delta}
	\left(\frac{1}{n}\right)^{\beta/2}.
	\end{align*}
		
	To optimize the above bound, if $\gamma \in (1/2,1]$, then we choose $\delta =2$ and obtain
	\begin{align*}
%	\label{EP_L2_4}
	\e[|Z_t^{(n)}|]
	\leq M_4
	\Bigg\{
	\varepsilon
	+\varepsilon^{2\gamma-1}
	+\varepsilon^{1-\widehat{\alpha}(1-\beta)}
	+\frac{1}{n^{\rho/2}}
	+\frac{1}{\varepsilon n^{\gamma}}
	+\frac{1}{\varepsilon n^{\beta}} 
	+\left(\frac{1}{\varepsilon}+1\right)\frac{1}{n^{\beta/2}}
	\Bigg\},
	\end{align*}
	where $M_4$ is some constant defined by
	\begin{align*}
	M_4
	:=e^{KT} \max\Bigg\{
	1,
	\frac{2TK^2}{\log 2},
	2T\left\{ \frac{K^{2}}{\log 2}+ K\right\},
	KT\{C_4^{1/2}+C_4^{\rho/2}\},
	\frac{2K^2T C_4^{\gamma}}{\log 2},
	\frac{4K^2 T C_4^{\beta}}{\log 2} \int_{0}^{\infty} z^2 \nu(dz),
	\\
	\frac{4 \cdot 3^{1/2} K^{3/2}T C_4^{\beta/2}}{\log 2} \int_{0}^{\infty} z^2 \nu(dz)
	\left\{ 4+\sup_{ t \leq T} \e[|X_{s}|^2]+C_3\right\}^{1/2}
	\Bigg\}.
	\end{align*}
	We choose $\varepsilon=n^{-q}$ and then we choose the optimal $q>0$.
	There are again two cases to consider, if $\alpha_{\nu}<\frac{2(1-\gamma)}{1-\beta}$, then we choose $\widehat{\alpha}=\frac{2(1-\gamma)}{1-\beta}$ and then $2\gamma-1=1-\widehat{\alpha}(1-\beta)$.
	Hence by choosing $q$ as $q(2\gamma-1)=\beta/2-q$, that is $q=\frac{\beta}{4\gamma}$, we have
	\begin{align*} %\label{EP_L2_6}
	\e[|Z_t^{(n)}|]
	\leq 
	7M_4\left\{
	\left(\frac{1}{n}\right)^{\rho/2}
	+ \left(\frac{1}{n}\right)^{\frac{\beta}{2}\left(1-\frac{1}{2\gamma}\right)}
	\right\}.
	\end{align*}
	If $\alpha_{\nu} \geq \frac{2(1-\gamma)}{1-\beta}$, then we choose $\widehat{\alpha}=\alpha_{\nu}+\varepsilon$ for any $\varepsilon \in (0,\frac{1}{1-\beta}-\alpha_{\nu})$ and then $2\gamma-1>1-(\alpha_{\nu}+\varepsilon)(1-\beta)$.
	Hence by choosing $q$ such that $q\{1-(\alpha_{\nu}+\varepsilon)(1-\beta)\}=\beta/2-q$, that is $q=\frac{\beta}{2}\frac{1}{2-(\alpha_{\nu}+\varepsilon)(1-\beta)}$, we have
	\begin{align*}
	\e[|Z_t^{(n)}|]
	\leq 
	7M_4\left\{
	\left(\frac{1}{n}\right)^{\rho/2}
	+ \left(\frac{1}{n}\right)^{\frac{\beta}{2} \left( 1-\frac{1}{2-(\alpha_{\nu}+\varepsilon)(1-\beta)}\right) }
	\right\}.
	\end{align*}
	This concludes the proof for $\gamma \in (1/2,1]$.
	
	If $\gamma = 1/2$, then we choose $\varepsilon=n^{-q}$ and $\delta =n^{p}$ with $p,q>0$ and $p+q<\beta/2<1/2=\gamma$. Then 
	\begin{align*}%\label{EP_L2_7}
	e^{-KT}\e[|Z_t^{(n)}|] 
	&\leq \frac{1}{n^{q}} 
	+\frac{2TK^2}{p \log n}
	+2T\left\{ \frac{K^{2}}{p \log n}+ K\right\} \frac{1}{n^{q-q\widehat{\alpha}(1-\beta)}}
	+KT \left\{
	\left( \frac{C_4}{n}\right)^{1/2}
	+\left( \frac{C_4}{n}\right)^{\rho/2}
	\right\}
	\notag \\
	&\quad
	+K^2T C_4^{1/2} \frac{ n^{p+q}}{p \log n}
	\left(\frac{1}{n}\right)^{1/2}
	+2 K^2 T C_4^{\beta} \int_{0}^{\infty} z^2 \nu(dz)
	\frac{n^{p+q}}{p\log n}
	\left(\frac{1}{n}\right)^{\beta}
	\notag\\
	&\quad
	+2 \cdot 3^{1/2} K^{3/2}T \int_{0}^{\infty} z^2 \nu(dz)
	\left\{ 4+\sup_{s \leq t} \e[|X_{s}|^2]+C_3\right\}^{1/2}
	\frac{n^{p+q}}{p\log n}
	\left(\frac{1}{n}\right)^{\beta/2}.
	\end{align*}
	Hence we can conclude that
	\begin{align*}
		\e[|Z_t^{(n)}|]
		\leq \frac{M_5}{\log n},
	\end{align*}
	where the constant $M_5$ is given by 
	\begin{align*}
	M_5=
	e^{KT} \max\Bigg\{
	1,
	\frac{2TK^2}{p},T\left\{\frac{K^2}{p}+K\right\},
	2KT\left\{C_4^{1/2}+C_4^{\rho/2} \right\},
	\frac{K^2T C_4^{1/2}}{p},
	\frac{2 K^2 T C_4^{\beta}}{p} \int_{0}^{\infty} z^2 \nu(dz),\\
	\frac{2 \cdot 3^{1/2} K^{3/2}TC_4^{\beta/2}}{p} \int_{0}^{\infty} z^2 \nu(dz)
	\left\{ 4+\sup_{s \leq t} \e[|X_{s}|^2]+C_3\right\}^{1/2}
	\Bigg\}.
	\end{align*}
	This concludes the proof for $\gamma =1/2$.
	
	\end{proof}

\section*{Acknowledgements}
The second author was supported by JSPS KAKENHI Grant Number 16J00894 and 17H06833.

\end{document}